\newtheorem{theorem}{Theorem}[section]
\newtheorem{corollary}[theorem]{Corollary}
\newtheorem{lemma}[theorem]{Lemma}
\newtheorem{proposition}[theorem]{Proposition}
\theoremstyle{definition}
\newtheorem{definition}[theorem]{Definition}
\newtheorem{remark}[theorem]{Remark}
\newtheorem{example}[theorem]{Example}
\theoremstyle{remark}
\renewcommand{\theclaim}{\textup{\theclaim}}
\newtheorem*{acknowledgements}{Acknowledgements}
\numberwithin{equation}{section}
\def\openone
\newbox\ipbox
\newcommand{\ip}[2]{\left\langle #1\, , \,#2\right\rangle}
\newcommand{\diracb}[1]{\left\langle #1\mathrel{\mathchoice

{\setbox\ipbox=\hbox{$\displaystyle \left\langle\mathstrut
#1\right.$}

\vrule height\ht\ipbox width0.25pt depth\dp\ipbox}

{\setbox\ipbox=\hbox{$\textstyle \left\langle\mathstrut
#1\right.$}

\vrule height\ht\ipbox width0.25pt depth\dp\ipbox}

{\setbox\ipbox=\hbox{$\scriptstyle \left\langle\mathstrut
#1\right.$}

\vrule height\ht\ipbox width0.25pt depth\dp\ipbox}

{\setbox\ipbox=\hbox{$\scriptscriptstyle \left\langle\mathstrut
#1\right.$}

\vrule height\ht\ipbox width0.25pt depth\dp\ipbox}

}\right. }
\newcommand{\dirack}[1]{\left. \mathrel{\mathchoice

{\setbox\ipbox=\hbox{$\displaystyle \left.\mathstrut
#1\right\rangle$}

\vrule height\ht\ipbox width0.25pt depth\dp\ipbox}

{\setbox\ipbox=\hbox{$\textstyle \left.\mathstrut
#1\right\rangle$}

\vrule height\ht\ipbox width0.25pt depth\dp\ipbox}

{\setbox\ipbox=\hbox{$\scriptstyle \left.\mathstrut
#1\right\rangle$}

\vrule height\ht\ipbox width0.25pt depth\dp\ipbox}

{\setbox\ipbox=\hbox{$\scriptscriptstyle \left.\mathstrut
#1\right\rangle$}

\vrule height\ht\ipbox width0.25pt depth\dp\ipbox}

} #1\right\rangle}
\newcommand{\cj}[1]{\overline{#1}}
\newcommand{\bz}{\mathbb{Z}}
\newcommand{\br}{\mathbb{R}}
\newcommand{\bc}{\mathbb{C}}
\def\blfootnote{\xdef\@thefnmark{}\@footnotetext}
\renewcommand{\mod}{\operatorname{mod}}
\newcommand{\supp}[1]{\text{supp} (#1)}
\def\H{\mathcal{H}}
\def\-{^{-1}}
\begin{document}

\title[Fractal measures with affine scales]{Fourier duality for fractal measures with affine scales}
\author{Dorin Ervin Dutkay}
\blfootnote{Supported in part by the National Science Foundation.}
\address{[Dorin Ervin Dutkay] University of Central Florida\\
	Department of Mathematics\\
	4000 Central Florida Blvd.\\
	P.O. Box 161364\\
	Orlando, FL 32816-1364\\
U.S.A.\\} \email{ddutkay@mail.ucf.edu}

\author{Palle E.T. Jorgensen}
\address{[Palle E.T. Jorgensen]University of Iowa\\
Department of Mathematics\\
14 MacLean Hall\\
Iowa City, IA 52242-1419\\}\email{jorgen@math.uiowa.edu}

\thanks{} 
\subjclass[2000]{47B32, 42B05, 28A35, 26A33,  62L20.}
\keywords{Affine fractal, Cantor set, Cantor measure, iterated function system, Hilbert space, Beurling density, Fourier bases.}

\begin{abstract}
   For a family of fractal measures, we find an explicit Fourier duality. The measures in the pair have compact support in $\br^d$, and they both have the same matrix scaling. But the two use different translation vectors, one by a subset $B$ in $\br^d$, and the other by a related subset $L$. Among other things, we show that there is then a pair of infinite discrete sets $\Gamma(L)$ and $\Gamma(B)$ in $\br^d$ such that the $\Gamma(L)$-Fourier exponentials are orthogonal in $L^2(\mu_B)$, and the $\Gamma(B)$-Fourier exponentials are orthogonal in $L^2(\mu_L)$. These sets of orthogonal ``frequencies'' are typically lacunary, and they will be obtained by scaling in the large. The nature of our duality is explored below both in higher dimensions and for examples on the real line.

    Our duality pairs do not always yield orthonormal Fourier bases in the respective $L^2(\mu)$-Hilbert spaces, but depending on the geometry of certain finite orbits, we show that they do in some cases. We further show that there are new and surprising scaling symmetries of relevance for the ergodic theory of these affine fractal measures.  
\end{abstract}
\maketitle \tableofcontents
\section{Introduction}\label{intr}

 Fractal scaling and self-similarity occurs in nature, and in
applications, such as to large communication networks. To understand the
nature of fractal scaling, it has proved useful to develop specific model
cases. Here we focus on such a class, those given by a finite family of
affine transformations.
Iteration in the small yields fractal measures $\mu$, and their support sets
fall into one of the classes of compact fractals embedded in $\br^d$. Iteration
of scale in the large, in turn, leads to a kind of  fractal networks
comprising of a lattice skeleton and lacunarity, degrees of sparsity. We
study when these scales in the large lead to orthogonal families in
$L^2(\mu)$.

 There have been a number of recent papers dealing with and treating a variety of features of the harmonic analysis of fractal measures $\mu$ with affine scales, see for example \cite{DJ06, DJ07a, DJ07b, DJ07c, DJ07d, DJ08, DJ09, GIL09, HL08, IW08,St99a, St99b, St00}.

     In this paper we study pairs of orthogonal complex exponentials in two Hilbert spaces $L^2(\mu)$ for a pair of fractal measures $\mu$ with affine scales. Each $\mu$ in the pair has the same scaling matrix, but the affine mappings making up the iterated function systems (IFSs) are different.

    The particular pairs we study are selected from a certain axiom involving a fixed complex Hadamard matrix. It is indexed by a triple, a matrix and two sets of vectors: more specifically, by an expansive $d$ by $d$ matrix $R$ with integer entries, and by two subsets $B$ and $L$ in $\br^d$ having the same cardinality. We show that when the data $(R, B, L)$ are fixed, we then get a naturally defined pair of affine measures $\mu_B$ and $\mu_L$, each with $R$-selfsimilarity. The two measures arise by taking scaling in the small with powers of the inverse $R^{-1}$, and initiating with the given sets $B$ and $L$. The support of the measures is typically a Cantor fractal, e.g., a Cantor set on the line, or for example a planar Sierpinski set in $\br^2$.  

    We offer a detailed Fourier duality for the pair, and we show that there is a pair of discrete sets $\Gamma(L)$ and $\Gamma(B)$ in $\br^d$ such that the $\Gamma(L)$-Fourier exponentials are orthogonal in $L^2(\mu_B)$, and the $\Gamma(B)$-Fourier exponentials are orthogonal in $L^2(\mu_L)$. These sets of orthogonal ``frequencies'' will be obtained by scaling in the large. The nature of this duality is explored below both in higher dimensions and for examples on the real line.

    Our duality pairs do not always yield orthonormal Fourier bases in the respective $L^2(\mu)$-Hilbert spaces, but we show that they do in some cases. We further show that there are new and surprising scaling symmetries of relevance for the ergodic theory of these affine fractal measures.

 The paper is organized as follows. The problem we consider here began with the question raised first by Fuglede \cite{Fu74} for open subsets $\Omega$ in $\br^d$ concerning orthogonal Fourier bases for the corresponding Hilbert space $L^2(\Omega)$ with respect to Lebesgue measure, i.e., of orthogonal bases (ONBs) of complex exponentials. While the possibility of such Fourier bases is related to tiling properties for $\Omega$, it is known not to be equivalent \cite{Tao04}. For a brief summary of these questions, see for example \cite{JP99,IKP99, IKT03, JoPe91, Jor82a, Jor82b}.

     However, the ideas from \cite{JP99} and \cite{DJ07a} still suggested intriguing connections between spectra and geometry, more specifically that Fourier bases should be tied to an intrinsic selfsimilarity, and further that this can be made precise with the use of Hadamard matrices (see Definition \ref{def2.5}). But in addition, this suggested algorithmic iterations of the similarity transformations, so the introduction of iterated function systems (IFSs) built from a finite family of affine transformations in $\br^d$ (see Definition \ref{def2.1}). Thus, the measures $\mu$ arising in the limit can be expected to come from a dual affine iteration. Further \cite{JP99} suggested a definite relationship between the two sides in such a duality.

      In section \ref{comp} we introduce the Hadamard matrices, and we prove the stated relations for the two IFS-measures in duality. In Proposition \ref{pr2.5} we show that each of the Hilbert spaces $L^2(\mu)$ for the dual system of measures have a natural pair of infinite families of orthogonal Fourier frequencies. The fractal measures arise by iteration in the small, while the orthogonal families by iteration in the large.

    The interplay between the two sides in the duality is made precise in Corollaries \ref{cor2.7} and \ref{cor2.9}. In Theorem \ref{th2.10} we introduce a class of invertible matrices which define equivalence between the two measures. Further we offer examples in section \ref{exam} to the effect that the two Hilbert spaces $L^2(\mu)$ for dual measures may have different Fourier properties.

     Further we show in Lemma \ref{lem3.3} that the nature of the orthogonal families from Proposition \ref{pr2.5} is determined by geometry. By this we mean that for infinite families of orthogonal Fourier frequencies, the obstruction to the ONB property is determined by whether a certain pair of dynamical systems have non-trivial orbits. A further aspect of the duality is pointed out in Lemma \ref{lem5.1} with the use of a pair of transfer operators.

     We offer computable examples in section \ref{exam} and \ref{canto} which we believe may be of independent interest. The example in section \ref{canto} has already been the subject of extensive work, see for example \cite{JP98, DJ09,DHS09}.

  In section \ref{canto} we prove that $L^2(\mu)$ may have orthogonal Fourier bases ONBs (i.e., ONBs of complex exponential frequencies) having arbitrarily small upper Beurling density. Compared with what is usually expected in standard sample theory, this result reveals new features of the harmonic analysis of the Hilbert spaces $L^2(\mu)$.
 
\section{Pairs of spectral fractal measures}\label{comp}

  There is a general construction of measures with intrinsic selfsimilarity due to Hutchinson \cite{Hut81}, but it is closely related to families of infinite product measures considered earlier. The starting point for this construction is a finite family $F$ of contractive transformations in a complete metric space and an assignment of probabilities on $F$. Repeated iteration of the mappings in $F$, and taking averages, then leads to a unique Borel probability measure $\mu$ in the limit; see \eqref{eqi2} below. A special case of this is the measure resulting from a Cantor iteration, and recursive rescaling. Motivated by this, we will be concerned here with the special case when the mappings in $F$ are affine transformations in $\br^d$ for some $d$, see \eqref{eqi4}. In that case, there is a family of complex exponentials indexed by points in $\br^d$, and we will be interested in iterative algorithms for the construction of maximal orthogonal families in $L^2(\mu)$. Ideally we ask for these families to form orthonormal bases (ONBs) for $L^2(\mu)$, i.e., Fourier bases.

\begin{definition}\label{def2.1}
Let $(Y,d)$ be a complete metric space, and let $(\tau_i)_{i=1}^N$ be a finite system of contractive mappings, i.e., there is a constant $c$, $0<c<1$ such that 
\begin{equation}
d(\tau_i(x),\tau_i(y))\leq c d(x,y),\quad(x,y\in Y, i=1,\dots, N)
\label{eqi1}
\end{equation}
Let $(p_i)_{i=1}^N$, $p_i\geq 0$ satisfy $\sum_{i=1}^Np_i=1$. Then by \cite{Hut81} there is a unique probability Borel measure $\mu$ such that 
\begin{equation}
\mu=\sum_{i=1}^Np_i\mu\circ\tau_i^{-1}
\label{eqi2}
\end{equation}
where $\mu\circ\tau^{-1}$ is the measure given by $(\mu\circ\tau^{-1})(E)=\mu(\tau^{-1}(E))$, $E\in\mathfrak B(Y)$=Borel sets, and $\tau^{-1}(E)=\{x\in Y : \tau(x)\in E\}$.
\end{definition}

Here we will study the case $Y=\br^d$ with its usual metric, and we will be interested in families $(\tau_i)$ consisting of transformations of the form 
\begin{equation}
x\mapsto R^{-1}(x+b)
\label{eqi3}
\end{equation}
where $R$ is a fixed expansive matrix (i.e., all eigenvalues $\lambda$ satisfy $|\lambda|>1$) and where $b$ is in a finite subset $B$ of $\br^d$.

We will further restrict to the case of equal probabilities $p_i=1/N$ in \eqref{eqi2}.

The mappings in \eqref{eqi3} will be denoted 
\begin{equation}
\tau_b(x)=R^{-1}(x+b)
\label{eqi4}
\end{equation}

It is known that the corresponding measure $\mu=\mu_{R,B}$ solving \eqref{eqi2} has as support a Cantor set $\br^d$. However, depending on $B$, the measure $\mu$ will vary. It is convenient to chose $B$ such that $0\in B$. 
In that case 
\begin{equation}
\supp{\mu}=\left\{\sum_{k=1}^\infty R^{-k}b_k : b_k\in B\right\}=:X(B)
\label{eqi5}
\end{equation}

Setting 
$$e_t(x):=e^{2\pi i t\cdot x},\quad (t,x\in\br^d)$$
we get 
\begin{definition}
Let $\Gamma\subset\br^d$ be some discrete subset, and let 
$$E(\Gamma):=\left\{e_\gamma : \gamma\in\Gamma\right\}.$$
We say that $\Gamma$ is orthogonal in $L^2(\mu)$ iff the functions in $E(\Gamma)$ are orthogonal, i.e., 
$$\ip{e_{\gamma_1}}{e_{\gamma_2}}=\widehat\mu(\gamma_2-\gamma_1)=0,\mbox{ for all }\gamma_1\neq\gamma_2\in \Gamma,$$

where $\widehat\mu$ is defined as the Fourier transform 
\begin{equation}
\widehat\mu(t)=\int_{\br^d}e_t(x)\,d\mu(x).
\label{eqi6}
\end{equation}

\end{definition}

\begin{definition}

Set $N:=\#B$ and
\begin{equation}
\chi_B(t)=\frac1N\sum_{b\in B}e_b(t)
\label{eqi8}
\end{equation}
\begin{equation}
\delta_B:=\frac1N\sum_{b\in B}\delta_b\mbox{ (Dirac notation)};
\label{eqi9}
\end{equation}
then 
\begin{equation}
\chi_B(t)=\widehat\delta_B(t)
\label{eqi10}
\end{equation}
\end{definition}

\begin{lemma}\label{lem2.4}
Let $R$ and $B$ be as above, and let $(\tau_b)_{b\in B}$ be the IFS in \eqref{eqi4}. Let $\mu=\mu_{R,B}$ be the corresponding Hutchinson measure, and let $R^{T}$ be the transposed matrix. Then 
\begin{equation}
\widehat\mu(t)=\prod_{k=1}^\infty\chi_B\left((R^T)^{-k}(t)\right)
\label{eqi11}
\end{equation}
where the infinite product is absolutely convergent. 
\end{lemma}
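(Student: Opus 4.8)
The plan is to turn the self-similarity identity \eqref{eqi2} for $\mu=\mu_{R,B}$ (with $p_b=1/N$) into a multiplicative scaling equation for $\widehat\mu$, and then to iterate it. First I would apply the Fourier transform \eqref{eqi6} to the invariance relation $\mu=\frac1N\sum_{b\in B}\mu\circ\tau_b^{-1}$. Using the change-of-variables identity $\int f\,d(\mu\circ\tau_b^{-1})=\int f\circ\tau_b\,d\mu$ and $\tau_b(x)=R^{-1}(x+b)$, the key computation is to move the matrix out of the exponent via a transpose:
\[
e_t\bigl(R^{-1}(x+b)\bigr)=e^{2\pi i\, t\cdot R^{-1}(x+b)}=e^{2\pi i\,(R^T)^{-1}t\cdot(x+b)}=e_{(R^T)^{-1}t}(x)\,e_b\bigl((R^T)^{-1}t\bigr).
\]
Summing over $b$ and pulling out the $b$-independent factor $e_{(R^T)^{-1}t}(x)$, the sum over $B$ produces exactly $\chi_B\bigl((R^T)^{-1}t\bigr)$ by \eqref{eqi8}, while the remaining integral is $\widehat\mu\bigl((R^T)^{-1}t\bigr)$. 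This yields the functional equation $\widehat\mu(t)=\chi_B\bigl((R^T)^{-1}t\bigr)\,\widehat\mu\bigl((R^T)^{-1}t\bigr)$.

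Next I would iterate this identity $n$ times. Replacing $t$ successively by $(R^T)^{-1}t,(R^T)^{-2}t,\dots$ gives
\[
\widehat\mu(t)=\left(\prod_{k=1}^{n}\chi_B\bigl((R^T)^{-k}t\bigr)\right)\widehat\mu\bigl((R^T)^{-n}t\bigr).
\]
To pass to the limit $n\to\infty$, I would use that $R$ is expansive, so $R^T$ has the same (modulus $>1$) eigenvalues and $(R^T)^{-n}t\to 0$ for every $t$. Since $\mu$ is a probability measure, $\widehat\mu$ is continuous with $\widehat\mu(0)=\mu(\br^d)=1$; hence the remainder factor $\widehat\mu\bigl((R^T)^{-n}t\bigr)\to 1$, and the finite products converge to the claimed infinite product.

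It remains to establish the absolute convergence of the product, which I expect to be the main technical point. Because $\chi_B(0)=1$, the elementary bound $\lvert e^{2\pi i\,b\cdot s}-1\rvert\le 2\pi\lvert b\rvert\,\lvert s\rvert$ gives $\lvert\chi_B(s)-1\rvert\le C\lvert s\rvert$ with $C=\frac{2\pi}{N}\sum_{b\in B}\lvert b\rvert$. The expansiveness of $R$ then supplies geometric decay: since the spectral radius of $(R^T)^{-1}$ is strictly less than $1$, one may fix $\rho\in(0,1)$ and a constant $c_0$ with $\lVert(R^T)^{-k}\rVert\le c_0\rho^{k}$, so that $\lvert\chi_B\bigl((R^T)^{-k}t\bigr)-1\rvert\le Cc_0\rho^{k}\lvert t\rvert$. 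As $\sum_k Cc_0\rho^{k}\lvert t\rvert<\infty$, the standard criterion $\bigl(\prod(1+a_k)\text{ converges absolutely}\iff\sum\lvert a_k\rvert<\infty\bigr)$ gives absolute convergence, uniformly on compact sets in $t$. The only delicate bookkeeping is the correct appearance of the transpose $R^T$ in the exponent (a consequence of $t\cdot R^{-1}x=(R^T)^{-1}t\cdot x$) and the translation of ``$R$ expansive'' into a norm estimate on $(R^T)^{-k}$; everything else is routine.
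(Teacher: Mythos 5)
Your proof is correct and complete. The paper itself offers no argument here---its ``proof'' of Lemma \ref{lem2.4} is literally the words ``Well known!''---and what you have written is precisely the standard argument being invoked: transform the invariance relation \eqref{eqi2} into the functional equation $\widehat\mu(t)=\chi_B\bigl((R^T)^{-1}t\bigr)\widehat\mu\bigl((R^T)^{-1}t\bigr)$, iterate, use expansiveness of $R$ (hence of $R^T$, which has the same eigenvalues) together with continuity of $\widehat\mu$ at $0$ and $\widehat\mu(0)=1$ to kill the remainder, and get absolute convergence from the Lipschitz bound $\lvert\chi_B(s)-1\rvert\le C\lvert s\rvert$ combined with the geometric decay $\lVert(R^T)^{-k}\rVert\le c_0\rho^k$ supplied by Gelfand's formula. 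Your write-up correctly handles the two points that are easy to fumble: the transpose appearing via $t\cdot R^{-1}x=(R^T)^{-1}t\cdot x$, and the passage from ``spectral radius $<1$'' to a summable norm estimate.
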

\begin{proof}
Well known!
\end{proof}

\begin{definition}\label{def2.5}
Let a $d\times d$ matrix $R$ be given. Assume $R\in M_d(\bz)$ and that $R$ is expansive. Let $B,L\subset \br^d$ be such that $0\in B$, $0\in L$, $N=\#B=\#L$ and assume 
\begin{equation}
R^kb\cdot l \in\bz, \mbox{ for all }b\in B, l\in L, k\in\bz, k\geq0
\label{eqi12}
\end{equation}
and further that the matrix
\begin{equation}
\frac{1}{\sqrt{N}}\left(e^{2\pi iR^{-1}b\cdot l}\right)_{b\in B,l\in L}
\label{eqi13}
\end{equation}
is unitary.

Set 
\begin{equation}
\Gamma(B):=\left\{\sum_{k=0}^n R^kb_k : b_k\in B, n\in\bz_+\right\},\mbox{ and}
\label{eqi14}
\end{equation}
\begin{equation}
\Gamma(L):=\left\{\sum_{k=0}^n(R^T)^kl_k : l_k\in L, n\in\bz_+\right\}
\label{eqi15}
\end{equation}
\end{definition}

\begin{proposition}\label{pr2.5}
Let $\mu_B$ be the Hutchinson measure for $(\tau_b)_{b\in B}$ and let $\mu_L$ be the Hutchinson measure for the dual system 
\begin{equation}
\sigma_l(x):=(R^T)^{-1}(x+l),\quad(l\in L)
\label{eqi16}
\end{equation}
Then $\Gamma(B)$ is orthogonal in $L^2(\mu_L)$, and $\Gamma(L)$ is orthogonal in $L^2(\mu_B)$.
\end{proposition}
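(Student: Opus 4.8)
The plan is to prove that $\Gamma(L)$ is orthogonal in $L^2(\mu_B)$ by exhibiting, for each pair of distinct $\gamma,\gamma'\in\Gamma(L)$, a single vanishing factor in the infinite product representing $\widehat{\mu_B}$; the companion statement that $\Gamma(B)$ is orthogonal in $L^2(\mu_L)$ then follows by symmetry. Indeed, both hypotheses \eqref{eqi12} and \eqref{eqi13} are invariant under the interchange $B\leftrightarrow L$, $R\leftrightarrow R^T$: condition \eqref{eqi12} is symmetric because $(R^T)^k l\cdot b=R^k b\cdot l$, and the matrix for the dual system is the transpose of \eqref{eqi13}, hence again unitary. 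By Lemma \ref{lem2.4}, $\widehat{\mu_B}(t)=\prod_{p=1}^\infty\chi_B\left((R^T)^{-p}t\right)$, and since this product is absolutely convergent it suffices to find one index $p$ for which $\chi_B\left((R^T)^{-p}(\gamma-\gamma')\right)=0$.

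First I would normalize the two elements to a common length. Writing $\gamma=\sum_{k=0}^n(R^T)^kl_k$ and $\gamma'=\sum_{k=0}^n(R^T)^kl'_k$ with a shared upper limit $n$ (permissible because $0\in L$), let $j$ be the least index with $l_j\neq l'_j$; this exists since $\gamma\neq\gamma'$. The terms with $k<j$ then cancel, so $\gamma-\gamma'=\sum_{k=j}^n(R^T)^k(l_k-l'_k)$. I would evaluate the factor at the specific power $p=j+1$, for which
\begin{equation}
(R^T)^{-(j+1)}(\gamma-\gamma')=(R^T)^{-1}(l_j-l'_j)+\sum_{k=j+1}^n(R^T)^{k-j-1}(l_k-l'_k).
\label{eqplan1}
\end{equation}

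The crux is to expand the exponent $b\cdot(R^T)^{-(j+1)}(\gamma-\gamma')$ occurring in $\chi_B$ and to observe that only the lowest-order term of \eqref{eqplan1} survives modulo the integers. Using the adjoint identity $b\cdot(R^T)^s v=R^s b\cdot v$ for $s\geq0$, each summand with $k\geq j+1$ equals $R^{k-j-1}b\cdot(l_k-l'_k)$ with $k-j-1\geq0$, hence is an integer by \eqref{eqi12} and contributes a trivial phase. What remains is
\begin{equation}
\chi_B\left((R^T)^{-(j+1)}(\gamma-\gamma')\right)=\frac1N\sum_{b\in B}e^{2\pi i\,b\cdot(R^T)^{-1}(l_j-l'_j)}=\frac1N\sum_{b\in B}e^{2\pi i\,R^{-1}b\cdot(l_j-l'_j)}.
\label{eqplan2}
\end{equation}
I would then recognize the right-hand side of \eqref{eqplan2} as the Hermitian inner product of the two columns of the matrix \eqref{eqi13} indexed by $l_j$ and $l'_j$. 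Since that matrix is unitary and $l_j\neq l'_j$, its columns are orthonormal and the sum vanishes, yielding $\widehat{\mu_B}(\gamma-\gamma')=0$.

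The one genuinely delicate step, and the one I would treat most carefully, is the bookkeeping that isolates the index $j$: the choice $p=j+1$ is exactly what simultaneously converts every higher-order difference ($k>j$) into an integer-valued, phase-trivial contribution through \eqref{eqi12}, and reduces the $k=j$ term to the single argument $(R^T)^{-1}(l_j-l'_j)$ that activates the column-orthogonality built into the Hadamard condition \eqref{eqi13}. No analytic obstacle arises beyond the absolute convergence of the product already granted by Lemma \ref{lem2.4}; the content is entirely the interplay of the integrality hypothesis \eqref{eqi12} with the unitarity hypothesis \eqref{eqi13}.
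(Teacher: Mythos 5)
Your proposal is correct and follows essentially the same route as the paper's proof: the infinite product formula of Lemma \ref{lem2.4}, the identification of the first differing digit, the use of the integrality hypothesis \eqref{eqi12} to reduce the relevant factor modulo $\bz$, and the recognition of that factor as an inner product of distinct rows/columns of the unitary matrix \eqref{eqi13}. The only cosmetic difference is that you prove \eqref{eq2.5.1} in detail and invoke the $B\leftrightarrow L$, $R\leftrightarrow R^T$ symmetry (which you rightly verify), whereas the paper details \eqref{eq2.5.2}; your explicit isolation of the factor at $p=j+1$ makes precise a step the paper only sketches.
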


\begin{proof}
The conclusions of the proposition amount to the following: 
\begin{equation}
\ip{e_{\gamma_1}}{e_{\gamma_2}}_{\mu_B}=\widehat\mu_B(\gamma_1-\gamma_2)=\delta_{\gamma_1,\gamma_2}\mbox{ for all }\gamma_1,\gamma_2\in\Gamma(L)
\label{eq2.5.1}
\end{equation}
and
\begin{equation}
\ip{e_{\xi_1}}{e_{\xi_2}}_{\mu_L}=\widehat\mu_L(\xi_1-\xi_2)=\delta_{\xi_1,\xi_2},\mbox{ for all }\xi_1,\xi_2\in\Gamma(B)
\label{eq2.5.2}
\end{equation}

In view of \eqref{eqi12} and \eqref{eqi13} in the definition, it is easy to prove one of the stated properties and we offer the details in the verification of \eqref{eq2.5.2}. Using Lemma \ref{lem2.4}, we get 
\begin{equation}
\widehat\mu_L(\xi)=\prod_{k=1}^\infty\chi_L(R^{-k}\xi)
\label{eq2.5.3}
\end{equation}
and we will be using this for $\xi=\xi_1-\xi_2\neq 0$, with pairs of points from the set $\Gamma(B)$. So we are concerned with 
$$\xi=\sum_{k=0}^nR^k(b_k-\beta_k)$$
where $b_0,\dots,b_n,\beta_0,\dots,\beta_n\in B$. 

Substitution into \eqref{eq2.5.3} shows that the product representation for $\widehat\mu_L(\xi_1-\xi_2)$ will contain a factor of the form 
\begin{equation}
\chi_L(R^{-1}(b-\beta))
\label{eq2.5.4}
\end{equation}
where $b,\beta\in B$ distinct. 
Then 
\begin{equation}
\chi_L(R^{-1}(b-\beta))=\frac{1}{N}\sum_{l\in L}\cj{e^{2\pi iR^{-1}\beta\cdot l}}e^{2\pi i R^{-1}b\cdot l}=\ip{\mbox{row}_\beta}{\mbox{row}_b}
\label{eq2.5.5}
\end{equation}
where $\mbox{row}_\beta$ is a notation for the row with index $\beta$ in the Hadamard matrix \eqref{eqi13}, and where $\ip{\cdot}{\cdot}$ in \eqref{eq2.5.2} refers to the standard complex inner product in $\bc^N$.

Since the matrix in \eqref{eqi13} is assumed unitary, it follows that distinct rows are orthogonal. The desired conclusion follows. 
\end{proof}

\begin{corollary}\label{cor2.6}
Let the data $(R,B,L)$ be as stated in Proposition \ref{pr2.5}; then both of the orthogonal systems $\Gamma(B)$ and $\Gamma(L)$ are infinite. 
\end{corollary}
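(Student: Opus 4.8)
The plan is to exhibit, inside each of the two index sets, an explicit sequence of points whose norms tend to infinity; unboundedness then forces the sets to be infinite, and no appeal to the orthogonality of Proposition \ref{pr2.5} is needed. The only feature of the data $(R,B,L)$ beyond Definition \ref{def2.5} that I would use is that $N=\#B=\#L\ge 2$, so that $B\setminus\{0\}$ and $L\setminus\{0\}$ are both nonempty (when $N=1$ the two measures are point masses and the statement is vacuous, since then $\Gamma(B)=\Gamma(L)=\{0\}$). So fix once and for all $b^{\ast}\in B\setminus\{0\}$ and $l^{\ast}\in L\setminus\{0\}$.

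First I would record the elementary fact that an expansive matrix moves every nonzero vector off to infinity under forward iteration. For $v\neq 0$ and $n\ge 0$ we have
\begin{equation}
\norm{v}=\norm{R^{-n}R^{n}v}\le\norm{R^{-n}}\,\norm{R^{n}v},
\end{equation}
and since every eigenvalue of $R$ has modulus $>1$, the operator norms $\norm{R^{-n}}$ tend to $0$; hence $\norm{R^{n}v}\ge\norm{v}/\norm{R^{-n}}\to\infty$.

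Next I would feed $b^{\ast}$ into this. Because $0\in B$, for each $n\ge 0$ the vector $R^{n}b^{\ast}$ lies in $\Gamma(B)$: in \eqref{eqi14} take the word with $b_{n}=b^{\ast}$ and $b_{0}=\dots=b_{n-1}=0$. By the preceding step the real numbers $\norm{R^{n}b^{\ast}}$ form an unbounded, hence infinite, set; consequently the vectors $R^{n}b^{\ast}$ are infinitely many distinct points of $\Gamma(B)$, so $\Gamma(B)$ is infinite.

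Finally, for $\Gamma(L)$ I would run the identical argument with $R$ replaced by $R^{T}$ and $b^{\ast}$ by $l^{\ast}$: the transpose has the same characteristic polynomial as $R$, hence the same spectrum, so $R^{T}$ is again expansive, and the padded words $(R^{T})^{n}l^{\ast}$ give infinitely many distinct points of $\Gamma(L)$ via \eqref{eqi15}. I do not anticipate a serious obstacle here; the only two points deserving a line of care are the growth estimate $\norm{R^{n}v}\to\infty$, which rests squarely on expansiveness, and the remark that the transpose of an expansive matrix is again expansive.
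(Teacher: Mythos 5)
Your proof is correct, but it takes a genuinely different route from the paper's. The paper proves a stronger statement: using the integrality hypothesis \eqref{eqi12} together with unitarity of the Hadamard matrix \eqref{eqi13}, it shows that two digit words $(b_0,\dots,b_n)$ and $(\beta_0,\dots,\beta_n)$ of the same length that produce the same point of $\Gamma(B)$ must coincide --- if $k$ is the first index where they differ, the identity $R^{-1}(b_k-\beta_k)=\sum_{i=k+1}^{n}R^{i-k-1}(\beta_i-b_i)$ forces $e^{2\pi i R^{-1}b_k\cdot l}=e^{2\pi i R^{-1}\beta_k\cdot l}$ for all $l\in L$, i.e.\ two identical rows in a unitary matrix, a contradiction. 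Hence $\#\Gamma(B)\geq N^{n+1}$ for every $n$, and this injectivity (uniqueness of the radix representation) is what the paper actually exploits later, e.g.\ in the count $\#(\Gamma_1\cap[-h,h])=2^n$ in the Beurling density remark after Proposition \ref{pr5.1}. Your argument, by contrast, uses only expansiveness of $R$ and nothing about the Hadamard structure: the sequence $R^nb^{\ast}$, legitimately placed in $\Gamma(B)$ via \eqref{eqi14} because $0\in B$, escapes to infinity since $\norm{R^nv}\geq\norm{v}/\norm{R^{-n}}$ and $\norm{R^{-n}}\to 0$ (the spectral radius of $R^{-1}$ is $<1$), and the same works for $R^T$, which is expansive because it has the same spectrum. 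This is more elementary and applies to any digit set containing $0$ and a nonzero vector, but it yields only infinitude, not injectivity of the digit map. One shared caveat: both proofs implicitly need $N\geq 2$; as you observe, for $N=1$ one has $\Gamma(B)=\Gamma(L)=\{0\}$, where the corollary actually fails (it is false, not vacuous) --- the paper is equally silent on this degenerate case, so this is not a defect of your argument relative to the paper's.
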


\begin{proof}
By symmetry, it is enough to prove this for $\Gamma(B)$. We will show that if $b_0,b_1,\dots$ and $\beta_0,\beta_1,\dots$ in $B$ yield the same vector
\begin{equation}
\gamma=\sum_{k=0}^nR^kb_k=\sum_{k=0}^nR^k\beta_k
\label{eq2.6.6}
\end{equation}
then $b_k=\beta_k$ for $0\leq k\leq n$. 

If not, then let $k$ be the first term in \eqref{eq2.6.6} with $b_k\neq \beta_k$. Then, from \eqref{eq2.6.6}, we get 
$$R^{-1}(b_k-\beta_k)=\sum_{i=k+1}^{n}R^{i-k-1}(\beta_i-b_i)$$
and, using \eqref{eqi12} this implies that 
$$e^{2\pi i R^{-1}b_k\cdot l}=e^{2\pi i R^{-1}\beta_k\cdot l},\mbox{ for all }l\in L.$$
But then the rows $b_k$ and $\beta_k$ in the Hadamard matrix \eqref{eqi13} coincide, so they cannot be orthogonal. The contradiction implies the corollary.

\end{proof}

\begin{corollary}\label{cor2.7}
Setting
\begin{equation}
\sigma_{\Gamma(L)}^{(B)}(t)=\sum_{\gamma\in\Gamma(L)}|\widehat\mu_B(t+\gamma)|^2,
\label{eq2.7.8}
\end{equation}
and
\begin{equation}
\sigma_{\Gamma(B)}^{(L)}(t)=\sum_{\xi\in\Gamma(B)}|\widehat\mu_L(t+\gamma)|^2,
\label{eq2.7.9}
\end{equation}
we note that the two functions are entire analytic of $t=(t_1,t_2,\dots,t_d)$, i.e., have entire analytic extensions to $\bc^d$. 

Further we have:
$$\sigma_{\Gamma(L)}^{(B)}(t)\leq 1,\mbox{ and }\sigma_{\Gamma(B)}^{(L)}(t)\leq 1\mbox{ for all }t\in\br^d.$$
\end{corollary}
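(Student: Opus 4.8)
The plan is to establish two things for each of $\sigma_{\Gamma(L)}^{(B)}$ and $\sigma_{\Gamma(B)}^{(L)}$: entire analyticity, and the uniform bound by $1$. By the symmetry between the two sides of the duality, I would carry out the argument for $\sigma_{\Gamma(L)}^{(B)}(t) = \sum_{\gamma\in\Gamma(L)}|\widehat\mu_B(t+\gamma)|^2$ and then observe that the $(L)$-version follows by interchanging the roles of $B$ and $L$ (and $R$ with $R^T$).

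For the \emph{analyticity}, the starting point is the product formula \eqref{eqi11}, which gives $\widehat\mu_B$ as an absolutely convergent infinite product of the entire functions $\chi_B((R^T)^{-k}(t))$. Since each $\chi_B$ is a finite exponential sum, it is entire, and $\widehat\mu_B$ extends to an entire function on $\bc^d$; so does $|\widehat\mu_B|^2 = \widehat\mu_B \cdot \overline{\widehat\mu_B(\bar t)}$ when viewed as the relevant holomorphic object. To conclude that the \emph{sum} over $\Gamma(L)$ is entire, I would invoke a normal-families/Weierstrass argument: show the series converges uniformly on compact subsets of $\bc^d$, so the limit is holomorphic. The key input for uniform convergence is a decay estimate on $|\widehat\mu_B(t+\gamma)|$ as $\gamma$ ranges over the lacunary set $\Gamma(L)$; because $\widehat\mu_B$ is a Fourier transform of a compactly supported probability measure, one controls its growth in the imaginary directions by a Paley--Wiener type bound, while the summability over $\gamma\in\Gamma(L)$ on the real axis is exactly what the orthogonality/Bessel structure below will supply.

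For the \emph{bound}, the clean route is to recognize $\sigma_{\Gamma(L)}^{(B)}(t)$ as a Bessel sum. By Proposition \ref{pr2.5}, $\Gamma(L)$ is an orthogonal family in $L^2(\mu_B)$, i.e. $\{e_\gamma : \gamma\in\Gamma(L)\}$ is an orthonormal set in $L^2(\mu_B)$ (orthogonality plus $\|e_\gamma\|_{\mu_B}=1$ since $\mu_B$ is a probability measure). For fixed $t$, apply Bessel's inequality to the unit vector $e_{-t}\in L^2(\mu_B)$ against this orthonormal set:
\begin{equation}
\sum_{\gamma\in\Gamma(L)}|\ip{e_{-t}}{e_\gamma}_{\mu_B}|^2 \leq \|e_{-t}\|_{\mu_B}^2 = 1.
\label{eqbessel}
\end{equation}
The inner product computes as $\ip{e_{-t}}{e_\gamma}_{\mu_B} = \int e^{-2\pi i(-t)\cdot x}\,\overline{e^{2\pi i\gamma\cdot x}}\,d\mu_B = \widehat\mu_B(t+\gamma)$ up to the sign conventions fixed in \eqref{eqi6}, so the left-hand side of \eqref{eqbessel} is precisely $\sigma_{\Gamma(L)}^{(B)}(t)$, giving the desired inequality $\leq 1$ for all real $t$. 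The same computation with $B$ and $L$ swapped yields $\sigma_{\Gamma(B)}^{(L)}(t)\leq 1$.

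The main obstacle I anticipate is the analyticity claim rather than the bound: the bound is a one-line Bessel inequality once orthogonality and the probability normalization are in hand, but proving that the infinite sum of $|\widehat\mu_B(t+\gamma)|^2$ extends holomorphically to $\bc^d$ requires genuine uniform control on compact sets of $\bc^d$, where $\widehat\mu_B$ can grow. I would handle this by combining the Paley--Wiener bound $|\widehat\mu_B(z)|\leq C\,e^{2\pi\, r\,|\operatorname{Im} z|}$ (valid because $\mu_B$ is supported in a bounded set of radius $r$) with the lacunary separation of $\Gamma(L)$ coming from the scaling-in-the-large structure of \eqref{eqi15}, so that the exponential growth from the imaginary part is outpaced by the decay of $|\widehat\mu_B(t+\gamma)|$ along the real directions of the widely spaced frequencies $\gamma$. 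Establishing this quantitative decay is the delicate point; everything else is standard.
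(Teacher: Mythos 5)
Your inequality argument is correct and coincides in substance with the paper's own: the paper writes $\sigma_{\Gamma(L)}^{(B)}(t)=\sum_{\gamma\in\Gamma(L)}|\ip{e_{-t}}{e_\gamma}_{\mu_B}|^2=\|P_Le_{-t}\|^2_{L^2(\mu_B)}$, with $P_L$ the orthogonal projection onto the closed span of $E(\Gamma(L))$, and bounds this by $\|P_L\|\,\|e_{-t}\|^2_{L^2(\mu_B)}=1$; your appeal to Bessel's inequality for the orthonormal family $\{e_\gamma:\gamma\in\Gamma(L)\}$ (orthogonal by Proposition \ref{pr2.5}, unit vectors because $\mu_B$ is a probability measure) is exactly the same estimate in different words.

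The genuine gap is in the analyticity claim, and you flag it yourself: you never establish the ``quantitative decay'' on which your sketch depends, and the mechanism you propose is not the right one. Lacunarity of $\Gamma(L)$ gives no term-wise decay rate for $|\widehat\mu_B(t+\gamma)|$, and Bessel's inequality as you invoke it gives square-summability only for real $t$; neither controls the analytically continued terms $F_\gamma(z)=\widehat\mu_B(z+\gamma)\,\overline{\widehat\mu_B(\bar z+\gamma)}$ on compact subsets of $\bc^d$, which is what a Weierstrass or normal-families argument requires. Two ways to close the gap. (a) The paper's route avoids the series altogether: from $\sigma_{\Gamma(L)}^{(B)}(t)=\ip{e_0}{U(t)P_LU(-t)e_0}_{L^2(\mu_B)}$, where $U(t)$ is multiplication by $e^{2\pi it\cdot x}$ on $L^2(\mu_B)$, compactness of $\supp{\mu_B}=X(B)$ shows that $z\mapsto U(z)$ extends to an entire operator-norm-valued function (its exponential power series converges in operator norm because $x$ stays bounded on $X(B)$), so the scalar matrix coefficient is entire and extends $\sigma_{\Gamma(L)}^{(B)}$, with no series estimates needed. (b) If you want to keep the series route, apply Bessel's inequality in $L^2(\mu_B)$ not to $e_{-t}$ but to the complex-parameter vectors $h_z(x)=e^{-2\pi i\bar z\cdot x}$, $z\in\bc^d$, which lie in $L^2(\mu_B)$ again by compact support; since $\ip{h_z}{e_\gamma}_{\mu_B}=\widehat\mu_B(z+\gamma)$, this yields $\sum_{\gamma\in\Gamma(L)}|\widehat\mu_B(z+\gamma)|^2\le\|h_z\|^2_{L^2(\mu_B)}\le e^{4\pi r|\operatorname{Im}z|}$ (with $r$ the radius of a ball containing $X(B)$), locally uniformly in $z$. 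By Cauchy--Schwarz the partial sums of $\sum_\gamma F_\gamma$ are then locally uniformly bounded on $\bc^d$, and since they converge pointwise on $\br^d$, a Vitali--Montel argument produces the holomorphic extension. In either repair the essential input is compactness of the support of $\mu_B$, not the lacunary geometry of $\Gamma(L)$ on which your sketch leans.
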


\begin{proof}
While most of the conclusions follow from earlier papers, we include here the proof for $\sigma_{\Gamma(L)}^{(B)}$ in \eqref{eq2.7.8}. The analogous formula holds for $\sigma_{\Gamma(B)}^{(L)}$ in \eqref{eq2.7.9}.

Let $P_L$ denote the orthogonal projection in $L^2(\mu_B)$ onto the closed subspace $\H(L)$ spanned by $E(\Gamma(L))$, i.e., by the exponentials 
$$E(\Gamma(L)):=\{e_\gamma : \gamma\in\Gamma(L)\}\subset L^2(\mu_B).$$
Then
$$\sigma_{\Gamma(L)}^{(B)}(t)=\sum_{\gamma\in\Gamma(L)}|\widehat\mu_B(t+\gamma)|^2=\sum_{\gamma\in\Gamma(L)}|\ip{e_{-t}}{e_\gamma}_{\mu_B}|^2=\|P_Le_{-t}\|^2_{L^2(\mu_B)}$$
$$=\ip{e_{-t}}{P_Le_{-t}}_{L^2(\mu_B)}=\ip{e_0}{U(t)P_LU(-t)e_0}_{L^2(\mu_B)},$$
where, for $t=(t_1,\dots,t_d)\in\br^d$, $U(t)$ denotes multiplication by $e^{2\pi i(t_1x_1+\dots+t_dx_d)}$. Since $\supp{\mu_B}=X(B)$ is compact, $U(t)P_LU(-t)$ is entire analytic. 

The computation further shows that 
$$\sigma_{\Gamma(L)}^{(B)}(t)\leq \|U(t)P_LU(-t)\|\|e_0\|_{L^2(\mu_B)}^2=\|P_L\|=1$$
where we used the fact that $P_L$ is a projection so has norm $1$. 

\end{proof}

\begin{corollary}\label{cor2.9}
The following conclusions hold:
\begin{enumerate}
\item
$\Gamma(L)$ is total in $L^2(\mu_B)$ (i.e., it is an ONB) iff $\sigma_{\Gamma(L)}^{(B)}\equiv 1$ in $\br^d$. 
\item
$\Gamma(B)$ is total in $L^2(\mu_L)$ iff $\sigma_{\Gamma(B)}^{(L)}\equiv1$ in $\br^d$.
\end{enumerate}
\end{corollary}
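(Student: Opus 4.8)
The plan is to read off everything from the operator identity already established in Corollary \ref{cor2.7}, namely
$$\sigma_{\Gamma(L)}^{(B)}(t)=\norm{P_Le_{-t}}^2_{L^2(\mu_B)},$$
where $P_L$ denotes the orthogonal projection of $L^2(\mu_B)$ onto $\H(L)=\Span E(\Gamma(L))$. By Proposition \ref{pr2.5} the system $\Gamma(L)$ is already orthogonal (indeed orthonormal, since $\widehat\mu_B(0)=1$), so the assertion that it is total is precisely the assertion $\H(L)=L^2(\mu_B)$, i.e. $P_L=I$. Thus the whole corollary is a matter of converting this operator-level statement into the pointwise statement $\sigma^{(B)}_{\Gamma(L)}\equiv 1$. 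I will only treat part (i); part (ii) follows verbatim after interchanging the roles of $B$ and $L$ (and of $R$ with $R^T$), using Corollary \ref{cor2.7} for $\sigma^{(L)}_{\Gamma(B)}$.

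For the forward direction, if $\Gamma(L)$ is total then $P_L=I$, so $\sigma^{(B)}_{\Gamma(L)}(t)=\norm{e_{-t}}^2_{L^2(\mu_B)}$. Since $\mu_B$ is a probability measure and $\abs{e_{-t}(x)}=1$ pointwise, this norm equals $1$ for every $t$, giving $\sigma^{(B)}_{\Gamma(L)}\equiv 1$ on $\br^d$.

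For the converse, suppose $\sigma^{(B)}_{\Gamma(L)}\equiv1$. Then for each $t$ we have $\norm{P_Le_{-t}}=\norm{e_{-t}}=1$. Here I would invoke the elementary fact that for an orthogonal projection $P$ one has $\norm{Pv}^2+\norm{(I-P)v}^2=\norm{v}^2$; hence $\norm{P_Le_{-t}}=\norm{e_{-t}}$ forces $(I-P_L)e_{-t}=0$, that is $e_{-t}\in\H(L)$ for every $t\in\br^d$.

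The main obstacle — and in fact the only ingredient beyond Corollary \ref{cor2.7} — is to promote ``$e_{-t}\in\H(L)$ for all $t$'' to ``$\H(L)=L^2(\mu_B)$'', which amounts to the totality of the full exponential family $\{e_t:t\in\br^d\}$ in $L^2(\mu_B)$. For this I would appeal to the Stone--Weierstrass theorem: the finite linear span of $\{e_t:t\in\br^d\}$ is a unital self-adjoint algebra, since $e_se_t=e_{s+t}$, $e_0=1$, and $\cj{e_t}=e_{-t}$, and it separates the points of the compact support $\supp{\mu_B}=X(B)$; hence it is uniformly dense in $C(X(B))$. Because $\mu_B$ is a finite Borel measure carried by this compact set, uniform density upgrades to density in $L^2(\mu_B)$. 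Since $\H(L)$ is closed and contains every $e_{-t}$, it contains this dense span, so $\H(L)=L^2(\mu_B)$, which is exactly the totality of $\Gamma(L)$. This completes part (i), and part (ii) is the mirror-image argument.
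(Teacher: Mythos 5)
Your proof is correct, and it is essentially the argument behind this statement in the literature: the paper itself offers no proof here, deferring to \cite{JP99} and \cite{DJ08}, and the route you take—reading $\sigma_{\Gamma(L)}^{(B)}(t)=\norm{P_Le_{-t}}^2$ off Corollary \ref{cor2.7}, using the Pythagorean identity to get $e_{-t}\in\H(L)$ for all $t$, and then invoking Stone--Weierstrass on the compact support $X(B)$ to upgrade this to $\H(L)=L^2(\mu_B)$—is precisely the standard argument in those references. The one step worth flagging as essential (and which you handle correctly) is the last one: density of the span of \emph{all} exponentials in $L^2(\mu_B)$ is not automatic for an arbitrary measure, and it is exactly the compactness of $\supp{\mu_B}$ that makes the uniform-to-$L^2$ upgrade work.
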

\begin{proof}
See \cite{JP99}, \cite{DJ08}.
\end{proof}

\begin{theorem}\label{th2.10}
Let $R,B,L$ be as stated in Definition \ref{def2.5}. Assume further that $R^T=R$. If there is a $G\in \mbox{GL}_d(\br)$ such that $G=G^T$, 

\begin{equation}
GR=RG
\label{eqt1.1}
\end{equation}
\begin{equation}
G(B)=L
\label{eqt1.2}
\end{equation}
then the two spectral functions \eqref{eq2.7.8}, \eqref{eq2.7.9} in Corollary \ref{cor2.7} satisfy
\begin{equation}
G\Gamma(B)=\Gamma(L)
\label{eqt1.3}
\end{equation}
and
\begin{equation}
\sigma_{\Gamma(B)}^{(L)}(t)=\sigma_{\Gamma(L)}^{(B)}(Gt)\mbox{ for all }t\in\br^d.
\label{eqt1.4}
\end{equation}
\end{theorem}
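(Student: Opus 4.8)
The plan is to funnel both conclusions through a single transfer identity between the two Fourier transforms, namely $\widehat{\mu_L}(t)=\widehat{\mu_B}(Gt)$, after which \eqref{eqt1.3} and \eqref{eqt1.4} fall out by substitution and reindexing. Since $R^T=R$, the product formula of Lemma \ref{lem2.4} takes the uniform shape $\widehat{\mu_B}(t)=\prod_{k=1}^\infty\chi_B(R^{-k}t)$ and $\widehat{\mu_L}(t)=\prod_{k=1}^\infty\chi_L(R^{-k}t)$ (the latter was already recorded as \eqref{eq2.5.3}), so the two masks $\chi_B,\chi_L$ are scaled by the same matrix $R$.

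First I would establish the mask identity $\chi_L(t)=\chi_B(Gt)$. Writing each $l\in L$ as $l=Gb$ with $b\in B$ (possible since $G(B)=L$) and using $G=G^T$, one has $l\cdot t=(Gb)\cdot t=b\cdot(G^Tt)=b\cdot(Gt)$, so summing $e^{2\pi i l\cdot t}$ over $L$ reproduces the sum defining $\chi_B$ evaluated at $Gt$. Inserting this into the product for $\widehat{\mu_L}$ and commuting $G$ through each $R^{-k}$ via $GR=RG$ gives $\widehat{\mu_L}(t)=\prod_k\chi_B(GR^{-k}t)=\prod_k\chi_B(R^{-k}Gt)=\widehat{\mu_B}(Gt)$, the desired transfer identity.

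Next I would prove the set equality \eqref{eqt1.3}. For $\gamma=\sum_{k=0}^nR^kb_k\in\Gamma(B)$, the relation $GR^k=R^kG$ yields $G\gamma=\sum_{k=0}^nR^k(Gb_k)$, and since $Gb_k\in G(B)=L$ this lies in $\Gamma(L)$; hence $G\Gamma(B)\subseteq\Gamma(L)$. For the reverse inclusion I would observe that $G^{-1}$ inherits all three hypotheses — it is symmetric, commutes with $R$, and satisfies $G^{-1}(L)=B$ — so the identical argument gives $G^{-1}\Gamma(L)\subseteq\Gamma(B)$, and therefore $G\Gamma(B)=\Gamma(L)$.

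Finally, \eqref{eqt1.4} is a direct computation: applying the transfer identity termwise,
$$\sigma_{\Gamma(B)}^{(L)}(t)=\sum_{\xi\in\Gamma(B)}|\widehat{\mu_L}(t+\xi)|^2=\sum_{\xi\in\Gamma(B)}|\widehat{\mu_B}(Gt+G\xi)|^2,$$
and as $\xi$ ranges over $\Gamma(B)$ the point $G\xi$ ranges bijectively over $\Gamma(L)$ by \eqref{eqt1.3}, so the last sum equals $\sum_{\gamma\in\Gamma(L)}|\widehat{\mu_B}(Gt+\gamma)|^2=\sigma_{\Gamma(L)}^{(B)}(Gt)$. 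The argument is essentially routine; the only genuine care is in the transpose bookkeeping — one must invoke $G=G^T$ precisely to pass from $(Gb)\cdot t$ to $b\cdot(Gt)$, and $R^T=R$ to make both product expansions share the same matrix $R$ — together with the verification that $\xi\mapsto G\xi$ is a bijection $\Gamma(B)\to\Gamma(L)$, which is exactly the content of \eqref{eqt1.3}.
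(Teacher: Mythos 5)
Your proposal is correct and follows essentially the same route as the paper's own proof: establish $\chi_L(t)=\chi_B(Gt)$ from $G(B)=L$ and $G=G^T$, push it through the product formula using $GR=RG$ to get $\widehat{\mu_L}(t)=\widehat{\mu_B}(Gt)$, and then reindex the spectral sum via the bijection $\xi\mapsto G\xi$ from $\Gamma(B)$ onto $\Gamma(L)$. The only difference is that you supply details the paper leaves implicit — the explicit use of $G=G^T$ in the mask identity and the $G^{-1}$ argument for the reverse inclusion in \eqref{eqt1.3} — which is a welcome sharpening, not a different method.
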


\begin{proof}
It follows from \eqref{eqt1.1} and \eqref{eqt1.2} that \eqref{eqt1.3} is satisfied. 

Since $G(B)=L$, we have 
$$\chi_L(t)=\frac1N\sum_{b\in B} e_{G(b)}(t)=\frac1N\sum_{b\in B}e_b(Gt)=\chi_B(Gt)$$
and
$$\widehat\mu_L(t)=\prod_{k=1}^\infty\chi_L(R^{-k}t)=\prod_{k=1}^\infty\chi_B(GR^{-k}t)=
\prod_{k=1}^\infty \chi_B(R^{-k}Gt)=\widehat\mu_B(Gt).$$
Furthermore, for $t\in\br^d$
$$\sigma_{\Gamma(B)}^{(L)}(t)=\sum_{\gamma\in\Gamma(B)}|\widehat\mu_L(t+\gamma)|^2
=\sum_{\gamma\in\Gamma(B)}|\widehat\mu_B(G(t+\gamma))|^2=\sum_{\gamma\in\Gamma(B)}|\widehat\mu_B(Gt+G\gamma)|^2$$
$$=\sum_{\xi\in\Gamma(L)}|\widehat\mu_B(Gt+\xi)|^2=\sigma_{\Gamma(L)}^{(B)}(Gt)$$
which is the desired formula \eqref{eqt1.4}.
\end{proof}

\begin{corollary}\label{cor2.11}
If $d=1$, and $\#(B)=\#(L)=2$, then there exists $g\in\br\setminus\{0\}$ such that 
$$\sigma_{\Gamma(B)}^{(L)}(t)=\sigma_{\Gamma(L)}^{(B)}(gt)\mbox{ for all }t\in\br.$$

\end{corollary}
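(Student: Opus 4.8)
The plan is to reduce everything to Theorem \ref{th2.10} by exhibiting the required matrix $G$ explicitly in the one-dimensional, two-point situation. First I would observe that for $d=1$ the matrix $R$ is a scalar, so the standing hypothesis $R^T=R$ of Theorem \ref{th2.10} holds automatically; likewise the requirement $G=G^T$ is vacuous for any $1\times 1$ matrix, and the commutation relation $GR=RG$ is trivial since scalars commute. Consequently the only genuine content of the hypotheses of Theorem \ref{th2.10} that must be arranged here is the existence of an invertible $G=(g)$, with $g\in\br\setminus\{0\}$, satisfying $gB=L$.

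Next, using $0\in B$, $0\in L$ together with $\#B=\#L=2$, I would write $B=\{0,b\}$ and $L=\{0,l\}$, where $b\neq 0$ and $l\neq 0$ (each set contains a second point distinct from its zero element). Setting $g:=l/b$ then yields a well-defined nonzero real number, and $gB=\{0,gb\}=\{0,l\}=L$, so $G:=(g)$ verifies \eqref{eqt1.2}. This is precisely the step where the cardinality-two hypothesis is essential: a single scalar dilation can always carry a two-point set through the origin onto another such set, whereas for $\#B\geq 3$ there need be no scalar $g$ with $gB=L$.

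Finally I would invoke Theorem \ref{th2.10} with this choice of $G$. Its conclusion \eqref{eqt1.4} reads
$$\sigma_{\Gamma(B)}^{(L)}(t)=\sigma_{\Gamma(L)}^{(B)}(Gt)=\sigma_{\Gamma(L)}^{(B)}(gt)\quad\text{for all }t\in\br,$$
which is exactly the claimed identity. I expect no analytic obstacle at this stage: the entire difficulty has already been absorbed into Theorem \ref{th2.10}, and the only thing to check by hand is the elementary existence of the dilation factor $g$, together with the observation that in dimension one the symmetry and commutativity hypotheses on $G$ degenerate into triviality.
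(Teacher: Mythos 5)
Your proof is correct and is precisely the argument the paper intends: Corollary \ref{cor2.11} is stated without proof immediately after Theorem \ref{th2.10}, as a direct application of it. Your filling in of the details---that in $d=1$ the hypotheses $R^T=R$, $G=G^T$, $GR=RG$ are automatic, and that $B=\{0,b\}$, $L=\{0,l\}$ with $b,l\neq 0$ (from $0\in B$, $0\in L$, $\#B=\#L=2$) yield $g=l/b$ with $gB=L$---is exactly what is needed.
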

\section{Hadamard matrices and extreme cycles}\label{exam}

In this section we prove that iteration by some fixed scaling matrix and its transposed leads to a new Fourier duality for a pair of IFS measures. Each of the two measures arises from an affine system, but the systems are linked by a complex Hadamard matrix, see Definition \ref{def2.5} and Example \ref{ex3.1} below. Lemma \ref{lem3.3}  offers a geometric tool which allows us to test when our iterative algorithms from section \ref{comp} for the maximal orthogonal Fourier families in $L^2(\mu)$ in fact lead to orthonormal Fourier bases (ONBs) in the respective $L^2(\mu)$-spaces.

The basic building block for our examples is the matrix \eqref{eqi13}. An $N\times N$ matrix $\H$ over $\bc$ is called {\it Hadamard} if $|\H_{j,k}|=1/\sqrt{N}$ for all $j,k=1,\dots,N$. 

Complex Hadamard matrices serve as tools in combinatorics, in algebra, and in applications, see e.g. \cite{CHK97, De09, Di03, Di04, DFG+04, DA08, Pe04, XY05}. The simplest are the unitary $N\times N$ matrices which define the Fourier transform on the finite cyclic groups $\bz_N=\bz/N\bz=\{0,1,\dots,N-1\}$. If $\zeta$ is the $N$-th root of $1$, $\xi=e^{2\pi i/N}$, then 
\begin{equation}
\frac{1}{\sqrt{N}}(\zeta^{j\cdot k})_{j,k\in\bz_N}
\label{eqe1}
\end{equation}
is a complex Hadamard matrix. Moreover, the complex Hadamard matrices are closed under the following operations:
\begin{enumerate}
\item permutation of rows;
\item permutation of columns;
\item multiplication of a fixed row by a fixed phase;
\item tensor product.
\end{enumerate}

By (iv) we mean this: if $U$ is $N\times N$ and $V$ is $M\times M$ complex Hadamard matrices, then $W:=U\otimes V$ is also a $NM\times NM$ complex Hadamard matrix. To see this note that $U\otimes V$ has entries of modulus $1/\sqrt{NM}$. With the definition of $U\otimes V$ on $\bc^N\otimes\bc^M\cong\bc^{NM}$
$$(U\otimes V)(z\otimes w):=Uz\otimes Vw$$
we see that $U\otimes V$ is again unitary. 
\begin{example}\label{ex3.1}

If 
$$U=\frac{1}{\sqrt{2}}
\begin{pmatrix}
	1&1\\i&-i
\end{pmatrix}
\mbox{ and }
V=\frac{1}{\sqrt2}\begin{pmatrix}
	1&1\\1&-1
\end{pmatrix}$$
then 
$$U\otimes V=\frac12 \begin{pmatrix}
	1&1&1&1\\
	i&-i&i&-i\\
	1&1&-1&-1\\
	i&-i&-i&i
\end{pmatrix}$$
By contrast, the Fourier transform of the group $\bz_4$ is 
$$\frac12 \begin{pmatrix}
	1&1&1&1\\
	1&i&-1&-i\\
	1&-1&1&-1\\
	1&-i&-1&i
\end{pmatrix}$$

\end{example}

The main distinction between duality pairs $\mu_B$ and $\mu_L$ in the case $d=1$ and $d>1$ has to do with the following:

\begin{definition}\label{def3.2}
Let $\tau_b(x)=R^{-1}(x+b)$ be an IFS as specified in \eqref{eqi4}. If $X(B)$ is defined as in \eqref{eqi5}, a {\it finite $B$-cycle point} is a point in $X(B)$ of the form $(w,w,\dots)$ obtained by the repetition of a fixed finite word $w=(b_1\dots b_n)$. When $w$ is fixed, we denote by $x(w)$ the corresponding finite cycle point, i.e., setting 
$$x_n:=R^{-1}b_1+\dots+R^{-n}b_n$$
we get 
$$x(w)=x_n+R^{-n}x_n+R^{-2n}x_n+\dots=(I-R^{-n})^{-1}x_n=R^n(R^n-I)^{-1}x_n$$
Note that $x(w)$ can also be defined by the property 
$$\tau_{b_1}\dots\tau_{b_n}x(w)=x(w)$$
We call the set $\{x(w),\tau_{b_n}x(w),\tau_{b_{n-1}}\tau_{b_n}x(w),\dots,\tau_{b_2}\dots\tau_{b_n}x(w)\}$ the {\it $B$-cycle generated by }$x(w)$.

Similarly, one can define $L$-cycle points and $L$-cycles, associated to the IFS $(\sigma_l)_{l\in L}$ (see \eqref{eqi16}). 
Consider a system $R,B,L$ in $\br^d$ satisfying the conditions in Definition \ref{def2.5}. A finite $B$-cycle $C$ in $X(B)$ is said to be {\it $L$-extreme} if $|\chi_L(x)|=1$ for all $x\in C$. 
A finite $L$-cycle $C$ is said to be {\it $B$-extreme} if $|\chi_B(x)|=1$ for all $x\in C$.

\end{definition}

\begin{lemma}\label{lem3.3}
Let $R,B,L$ in $\br$ (so the dimension $d=1$) be as specified in Definition \ref{def2.5}. In particular, we assume $0\in B$ and $0\in L$. 
\begin{enumerate}
\item Then $\Gamma(L)$ is an ONB in $L^2(\mu_B)$ iff the only $B$-extreme cycles in $X(L)$ are the singleton $\{0\}$. 
\item Moreover, the set $\Gamma(B)$ is an ONB in $L^2(\mu_L)$ iff the only $L$-extreme cycles in $X(B)$ are the singleton $\{0\}$

\end{enumerate}
For dimensions $d>1$, the condition on the extreme cycles is only necessary, but not always sufficient for the corresponding $\Gamma$ set to be an ONB.
\end{lemma}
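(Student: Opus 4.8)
The plan is to use the criterion from Corollary \ref{cor2.9}: $\Gamma(L)$ is an ONB in $L^2(\mu_B)$ iff $\sigma_{\Gamma(L)}^{(B)}\equiv 1$, so the task reduces to analyzing when this entire function, which is $\leq 1$ everywhere by Corollary \ref{cor2.7}, fails to equal $1$. The standard approach for these affine fractal problems is to recognize $\sigma_{\Gamma(L)}^{(B)}$ as a fixed point of a transfer (Ruelle) operator built from the weight $|\chi_L|^2$. First I would set up the transfer operator on continuous functions on a suitable compact attractor $X(L)$, namely $(R_W h)(x)=\sum_{l\in L}|\chi_L(\tau_l^{*}x)|^2 h(\tau_l^{*}x)$ using the dual IFS maps (with $R^T$), and verify the self-similarity identity $R_W\sigma=\sigma$ for $\sigma=\sigma_{\Gamma(L)}^{(B)}$. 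The weight $|\chi_L|^2$ satisfies the normalization $\sum_{l}|\chi_L(\tau_l^* x)|^2$-type partition-of-unity relation coming precisely from the unitarity of the Hadamard matrix in \eqref{eqi13}.

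Next I would exploit the equality case. Since $\sigma\leq 1$ and $\sigma$ is continuous on the compact set $X(L)$, it attains its maximum; one shows that the set where $\sigma=1$ (equivalently where $\sigma$ attains its max) is invariant under the branches $\tau_l^*$ that carry full weight, i.e. where $|\chi_L|=1$. Writing $h=1-\sigma\geq 0$, the fixed-point relation forces $h$ to vanish along the sub-branches where the weight is $1$, and by iterating this one sees that the only obstruction to $\sigma\equiv 1$ comes from points $x$ that stay forever in the region $|\chi_L(\cdot)|=1$ under backward iteration — and these are exactly the $B$-extreme cycles in $X(L)$ from Definition \ref{def3.2}. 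Conversely, if a nontrivial $B$-extreme cycle exists, one constructs directly an exponential $e_\gamma$ (with $\gamma$ the frequency attached to the cycle point) that is orthogonal to all of $E(\Gamma(L))$, showing the system is not total; concretely the cycle point gives a value $t$ at which $\sigma_{\Gamma(L)}^{(B)}(t)<1$, violating the ONB criterion. Part (ii) is identical after swapping the roles of $B$ and $L$ and replacing $R$ by $R^T$.

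The key reason this works cleanly in dimension $d=1$ is that the extreme cycle set can be pinned down combinatorially: in $\br$ the backward orbits under a single expansive scalar are well understood, and the set $\{x:|\chi_L(x)|=1\}$ is finite (it is cut out by the equation $\chi_L(x)=$ a phase, which on $\br$ has only finitely many relevant solutions modulo the lattice structure in \eqref{eqi12}), so ``staying forever in the extreme region'' genuinely reduces to lying on a finite cycle. I expect the main obstacle to be the forward direction of the \emph{sufficiency} argument: showing that $\sigma\equiv 1$ whenever the only extreme cycle is $\{0\}$. This requires a minimality/ergodicity argument for the transfer operator — controlling how the mass of $h=1-\sigma$ propagates and ruling out the possibility that $h$ is supported off the cycles yet still consistent with $R_W h=h$. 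This is precisely where dimension matters: in $d=1$ the attractor geometry forces the extreme-region dynamics onto finite cycles, whereas for $d>1$ one can have the extreme set stable under the dynamics without it reducing to periodic cycles, which is why the final sentence of the lemma flags that the cycle condition is only necessary, not sufficient, when $d>1$.
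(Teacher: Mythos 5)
Your overall strategy---realize $\sigma_{\Gamma(L)}^{(B)}$ as a fixed point of a Ruelle transfer operator, analyze the equality set in $\sigma\leq 1$, and identify extreme cycles as the obstruction---is the right one; indeed the paper itself gives no argument for Lemma \ref{lem3.3} beyond citing \cite{LW02,DJ06b}, and those references (as well as the paper's own Lemma \ref{lem5.1}) proceed exactly along these lines. However, your setup contains a genuine error that would derail the very first step you propose to ``verify'': the weight in the transfer operator must be $|\chi_B|^2$, not $|\chi_L|^2$. The invariance of $\sigma_{\Gamma(L)}^{(B)}$ comes from the refinement equation $\widehat\mu_B(\xi)=\chi_B\bigl((R^T)^{-1}\xi\bigr)\widehat\mu_B\bigl((R^T)^{-1}\xi\bigr)$ together with the $\Gamma(L)$-periodicity of $\chi_B$ (which uses \eqref{eqi12}); pulling that factor out of $\sum_{\gamma\in\Gamma(L)}|\widehat\mu_B(t+l+R^T\gamma)|^2$ gives
\begin{equation*}
\sigma_{\Gamma(L)}^{(B)}(t)=\sum_{l\in L}\bigl|\chi_B(\sigma_l(t))\bigr|^2\,\sigma_{\Gamma(L)}^{(B)}(\sigma_l(t)),
\end{equation*}
i.e.\ the operator $T_B$ of \eqref{eqt1}: $L$-branches paired with the $B$-weight. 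Likewise, the partition of unity furnished by the unitarity of \eqref{eqi13} is $\sum_{l\in L}|\chi_B(\sigma_l(x))|^2=1$; your claimed normalization $\sum_{l\in L}|\chi_L(\sigma_l(x))|^2=1$ is simply false (take $R=4$, $L=\{0,1\}$: at $x=0$ the sum is $1+\cos^2(\pi/4)=3/2$), and the operator with weight $|\chi_L|^2$ does not fix $\sigma_{\Gamma(L)}^{(B)}$. The same mix-up propagates into your identification of the obstruction: points whose backward orbit stays in $\{|\chi_L|=1\}$ are \emph{not} the $B$-extreme cycles of Definition \ref{def3.2}; $B$-extremality of an $L$-cycle means $|\chi_B|=1$ along it. If you ran your argument literally as written, you would characterize the ONB property by $L$-extreme cycles in $X(L)$, which is not the statement of the lemma.

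The second issue is that the direction carrying all the difficulty---that absence of nontrivial $B$-extreme cycles forces $\sigma_{\Gamma(L)}^{(B)}\equiv 1$---is left at the level of ``a minimality/ergodicity argument.'' That step is precisely the content of \cite{LW02,DJ06b}: one shows the set where the continuous invariant function $1-\sigma$ attains its maximum on a suitable compact invariant set is itself invariant under the branches carrying positive weight, and in dimension one one extracts from it a $B$-extreme cycle, so that $\sigma\not\equiv 1$ (together with $\sigma(0)=1$) produces a nontrivial extreme cycle. Your necessity direction (a nontrivial extreme cycle yields a point where $\sigma<1$, equivalently an exponential orthogonal to all of $E(\Gamma(L))$) is correct in spirit. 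So the proposal identifies the correct framework, but as written neither the fixed-point identity nor the completeness direction goes through: you need to correct the weight to $|\chi_B|^2$ and actually supply the maximum-set argument, rather than assert it.
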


\begin{proof}
\cite{LW02,DJ06b}
\end{proof}

In the analysis of extreme $B$-cycles, the following lemma will be useful. See \cite[Theorem 4.1]{DJ07a} for more details. 

\begin{lemma}\label{lem3.4}
If $C$ is a $B$-extreme cycle, then for all $x\in C$, and all $b\in B$, $b\cdot x\in\bz$. 
\end{lemma}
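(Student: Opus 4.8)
The plan is to prove Lemma \ref{lem3.4} by exploiting the cycle invariance together with the integrality hypothesis \eqref{eqi12} and the extreme-cycle condition $|\chi_B(x)|=1$. Let me set up the key objects. Suppose $C$ is a $B$-extreme $L$-cycle (I read the statement as: $C\subset X(L)$ is an $L$-cycle that is $B$-extreme, so $|\chi_B(x)|=1$ for all $x\in C$; the roles of $B$ and $L$ are symmetric, so I will argue generically and write ``$B$-extreme cycle'' for a cycle on which $|\chi_B|\equiv 1$). Recall $\chi_B(x)=\frac1N\sum_{b\in B}e^{2\pi i b\cdot x}$, a convex combination of $N$ unit-modulus complex numbers. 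The triangle inequality gives $|\chi_B(x)|\le 1$ with equality \emph{iff} all the summands $e^{2\pi i b\cdot x}$ point in the same direction, i.e.\ are equal. Since $0\in B$, the summand for $b=0$ is $1$, so the extreme condition forces $e^{2\pi i b\cdot x}=1$, that is $b\cdot x\in\bz$, for every $b\in B$ and every $x\in C$. This is exactly the assertion, so the entire content reduces to the elementary ``equality in the triangle inequality'' observation applied pointwise on the cycle.

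First I would record the inequality $|\chi_B(x)|=\frac1N\bigl|\sum_{b\in B}e^{2\pi i b\cdot x}\bigr|\le\frac1N\sum_{b\in B}|e^{2\pi i b\cdot x}|=1$, valid for all real $x$ since each term has modulus $1$. Next I would invoke the equality case: a sum of $N$ unit vectors in $\bc$ has modulus $N$ only when the vectors coincide. Then, using $0\in B$ to fix the common phase at $1$, I conclude $e^{2\pi i b\cdot x}=1$ for all $b\in B$, whence $b\cdot x\in\bz$. Applying this at each point $x$ of the cycle $C$ completes the argument.

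\begin{proof}
Let $x\in C$. Since $|e^{2\pi i b\cdot x}|=1$ for every $b\in B$, the triangle inequality gives
$$|\chi_B(x)|=\frac1N\Bigl|\sum_{b\in B}e^{2\pi i b\cdot x}\Bigr|\le\frac1N\sum_{b\in B}\bigl|e^{2\pi i b\cdot x}\bigr|=1,$$
with equality if and only if all the unit-modulus numbers $e^{2\pi i b\cdot x}$, $b\in B$, are equal. Because $C$ is $B$-extreme, $|\chi_B(x)|=1$, so equality holds and the $e^{2\pi i b\cdot x}$ share a common value. Since $0\in B$ contributes the term $e^{2\pi i\,0\cdot x}=1$, that common value is $1$, and therefore $e^{2\pi i b\cdot x}=1$, i.e.\ $b\cdot x\in\bz$, for every $b\in B$. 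As $x\in C$ was arbitrary, this holds for all $x\in C$ and all $b\in B$.
\end{proof}

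I do not anticipate a genuine obstacle here: the lemma is essentially a restatement of the equality case in the triangle inequality, and the only subtlety is bookkeeping about which of $B,L$ plays which role and the use of $0\in B$ (guaranteed in Definition \ref{def2.5}) to pin down the common phase. The one point worth stating carefully is the equality characterization for $\bigl|\sum_b e^{2\pi i b\cdot x}\bigr|=N$, which I would justify by noting that $\preal\sum_b e^{2\pi i b\cdot x}\le N$ with equality forcing each real part to equal $1$ and hence each term to equal $1$; this is cleaner than arguing about a general common phase and immediately yields $b\cdot x\in\bz$.
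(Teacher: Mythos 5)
Your proof is correct and follows essentially the same route as the paper's: both arguments reduce the extreme-cycle condition $|\chi_B(x)|=1$ to the equality case of the triangle inequality for a sum of $N$ unit-modulus terms, and both use $0\in B$ to force the common value of the exponentials $e^{2\pi i b\cdot x}$ to be $1$, hence $b\cdot x\in\bz$. Your write-up is, if anything, slightly more careful than the paper's in spelling out the equality characterization via real parts, but it is the same proof.
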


\begin{proof}
For all $x\in C$, we must have $|\chi_B(x)|=1$. Therefore 
$$\left|\sum_{b\in  B}e^{2\pi i b\cdot x}\right|=N.$$
All the terms in the sum have absolute value 1. There are $N$ of them, and one of them is 1, since $0\in B$. Therefore we have equality in the triangle inequality and this implies that $e^{2\pi i b\cdot x}=1$ for all $b\in B$. Therefore $b\cdot x\in\bz$ for all $b\in B$.

\end{proof}

   In the following, we point out the simplifications resulting from specialization to $d = 1$. In particular (Proposition \ref{pr4.1}) we point out that the dual systems from section \ref{comp} when specialized to the particular Hadamard matrices defining finite Fourier transforms yield intriguing pairs of fractal measures in duality.

In this section, we consider a family of examples which are related to the finite cyclic group $\bz_N$, but they display fractal features and fractal duality which has not been studied earlier in duality theorems involving the finite cyclic groups.

{\bf Setting 4.1.} Fix integers $M$ and $N$ in $\bz_+$, and assume $N|M$, i.e., $M=qN$ for some $q\in\bz_+$, $q>1$. We will consider the following instance of a system $(R,B,L)$ in $\br^d$ subject to the conditions in Definition \ref{def2.5}. Set $R=M$ and $L=\{0,1,\dots,N-1\}$, and $B=qL=\{0,q,2q,\dots, (N-1)q\}$.

As a result, we see that the Hadamard matrix for $R,B,L$ is 
\begin{equation}
\frac{1}{\sqrt N}\left(\zeta^{k\cdot l}\right)_{k,l\in\bz_N},\quad \zeta=e^{2\pi i/N}.
\label{eqd1}
\end{equation}
 
 \begin{proposition}\label{pr4.1}
 Let $(R,B,L)$ be a system constructed from the numbers $N$ and $M$ as in Setting 4.1. Then 
 \begin{enumerate}
 \item $\Gamma(L)$ is an ONB in $L^2(\mu_B)$;
 \item $\Gamma(B)$ is an ONB in $L^2(\mu_L)$.
 \end{enumerate}
\end{proposition}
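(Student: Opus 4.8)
The plan is to invoke the geometric criterion of Lemma \ref{lem3.3}, which for $d=1$ reduces both assertions to statements about extreme cycles: (i) holds as soon as the only $B$-extreme cycle in $X(L)$ is the singleton $\{0\}$, and (ii) holds as soon as the only $L$-extreme cycle in $X(B)$ is $\{0\}$. To control the extreme cycle points I would use Lemma \ref{lem3.4}, together with its evident $B\leftrightarrow L$ mirror image (whose proof is verbatim the same, using only $0\in L$): any point $x$ of a $B$-extreme cycle satisfies $b\cdot x\in\bz$ for every $b\in B$, and any point $x$ of an $L$-extreme cycle satisfies $l\cdot x\in\bz$ for every $l\in L$. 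First I would also record that $\{0\}$ really is such a cycle, since $\chi_B(0)=\chi_L(0)=1$, so the content is to rule out all other cycles.

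The crucial observation is a sharp size estimate for the two attractors. Since $R=M=qN$ with $q>1$, every element of $X(L)=\{\sum_{k\ge 1}M^{-k}l_k : l_k\in L\}$ is bounded above by $\sum_{k\ge 1}M^{-k}(N-1)=(N-1)/(M-1)=(N-1)/(qN-1)$, and a direct comparison shows this is strictly less than $1/q$ exactly because $q>1$. Similarly every element of $X(B)$ is bounded above by $q(N-1)/(qN-1)$, which is strictly less than $1$ for the same reason. Thus $X(L)\subset[0,1/q)$ and $X(B)\subset[0,1)$.

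For (i), take a $B$-extreme cycle point $x\in X(L)$; choosing the digit $b=q\in B$ in Lemma \ref{lem3.4} gives $qx\in\bz$, while $x\in[0,1/q)$ forces $qx\in[0,1)$, hence $qx=0$ and $x=0$. So the only $B$-extreme cycle is $\{0\}$, and Lemma \ref{lem3.3}(i) yields that $\Gamma(L)$ is an ONB in $L^2(\mu_B)$. For (ii), take an $L$-extreme cycle point $x\in X(B)$; choosing the digit $l=1\in L$ gives $x\in\bz$, while $x\in[0,1)$ forces $x=0$. So the only $L$-extreme cycle is $\{0\}$, and Lemma \ref{lem3.3}(ii) yields that $\Gamma(B)$ is an ONB in $L^2(\mu_L)$.

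I do not expect a serious obstacle here: the whole argument rests on the two interval bounds, and the only point demanding care is that both strict inequalities genuinely use the standing hypothesis $q>1$ (were $q=1$, the attractors would reach the endpoint $1$, spurious extreme cycles could appear, and the conclusion could fail). The remaining subtlety is simply matching the correct ``binding'' digit to each integrality condition---$b=q$ for $X(L)$ and $l=1$ for $X(B)$---which is what makes the two size estimates line up with the respective extreme-cycle conditions.
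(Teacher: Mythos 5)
Your proposal is correct and follows essentially the same route as the paper: both reduce the claim via Lemma \ref{lem3.3} to ruling out non-trivial extreme cycles, derive the integrality constraints on cycle points (the paper via the explicit characterizations $|\chi_L(t)|=1 \Leftrightarrow t\in\bz$ and $|\chi_B(t)|=1 \Leftrightarrow t\in\frac1q\bz$, you via Lemma \ref{lem3.4}, which encodes the same triangle-inequality computation), and then eliminate non-zero cycle points using the attractor bounds $X(L)\subset[0,\frac{N-1}{M-1}]$ and $X(B)\subset[0,\frac{q(N-1)}{M-1}]$ together with $q>1$. Your write-up is in fact slightly more explicit than the paper's about where the strict inequalities $\frac{N-1}{qN-1}<\frac1q$ and $\frac{q(N-1)}{qN-1}<1$ come from, but the argument is the same.
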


\begin{proof}
We will establish (i) and (ii) as an application of Lemma \ref{lem3.3}. Note that the IFS which generates $\mu_B$ is 
\begin{equation}
\tau_k^{(B)}(x)=\frac{x+kq}{M},\quad k\in\bz_N;
\label{eqd2}
\end{equation}
and for $\mu_L$ it is 
\begin{equation}
\tau_l^{(L)}(x)=\frac{x+l}{M},\quad l\in\bz_N.
\label{eqd3}
\end{equation}
For the two functions $\chi_B$ and $\chi_L$ we have 
\begin{equation}
\chi_L(t)=\frac{1}{N}\sum_{k=0}^{N-1}e_k(t)
\label{eqd4}
\end{equation}
and
\begin{equation}
\chi_B(t)=\chi_L(qt).
\label{eqd5}
\end{equation}
In exploring the extreme cycles we note that 
\begin{equation}
|\chi_L(t)|=1\mbox{ iff } t\in \bz;
\label{eqd6}
\end{equation}
and
\begin{equation}
|\chi_B(t)|=1\mbox{ iff } t\in\frac1q\bz.
\label{eqd7}
\end{equation}
Since $X(B)\subset[0,\frac{q{N-1}}{M-1}]$, $X(L)\subset[0,\frac{N-1}{M-1}]$ and $M=qN$, $q>1$, it follows that the only $L$-cycle in $X(B)$ is $\{0\}$.  And similarly, the only $B$-extreme cycle in $X(L)$ is $\{0\}$. The result now follows from Lemma \ref{lem3.3}.
\end{proof}

The next example relates to the pair of fractals in Proposition \ref{pr4.1} naturally
associated with the finite Fourier transform of $\bz_n$.
Since $\bz_n$ is its own Fourier dual, the corresponding $B$--$L$ duality may be
phrased in the language of the $\bz_n$-Fourier transform, with the two sets $B$
and $L$ essentially being a copy of $\bz_n$. Example \ref{ex4.2} below shows that if one
of the sets is changed by one point, then the ONB conclusions from Proposition \ref{pr4.1} no longer holds.

\begin{example}\label{ex4.2}

The following example shows that if the numbers in the sets $B$ or $L$ in Proposition \ref{pr4.1}
are modified, then the ONB conclusion may fail.

Let $R=8$ and $B=2\cdot\{0,1,2,3\}=\{0,2,4,6\}$ as in Proposition \ref{pr4.1}, but for $L$ now choose $L=\{0,1,2,7\}$, i.e., change 3 to 7 as compared to Proposition \ref{pr4.1} with $N=4$ and $q=2$. Now $x=1$ satisfies $\tau_7^{(L)}x=x$ and $|\chi_B(x)|=1$. Therefore $\{1\}$ is a cycle in $X(L)$ which is $B$-extreme. Combining this with the earlier observations we see that the following hold: $\Gamma(L)$ is not an ONB in $L^2(\mu_B)$, but $\Gamma(B)$ is an ONB in $L^2(\mu_L)$.
\end{example}

 The next example shows how the two spectral functions in Corollary \ref{cor2.7}
will typically be quite different when the conditions in Theorem \ref{th2.10} are
not satisfied. In the example the non-zero vectors in $B$ are linearly
independent while they are proportional in $L$. As a result there cannot be an
invertible matrix $G$ satisfying the conditions in Theorem \ref{th2.10}.
 
\begin{example}\label{ex3.2}
Let 
$$R=\begin{pmatrix}
	3&0\\
	0&3
\end{pmatrix}
,\quad
B=\left\{\begin{pmatrix}
	0\\0
\end{pmatrix}
,
\begin{pmatrix}
	1\\0
\end{pmatrix}
,
\begin{pmatrix}
	0\\1
\end{pmatrix}
\right\},\quad
L=\left\{
\begin{pmatrix}
	0\\0
\end{pmatrix}
,
\begin{pmatrix}
	1\\2
\end{pmatrix}
,
\begin{pmatrix}
	-1\\-2
\end{pmatrix}
\right\}
$$

\begin{figure}[ht]\label{fig1}
\centerline{ \vbox{\hbox{\epsfxsize 5cm\epsfbox{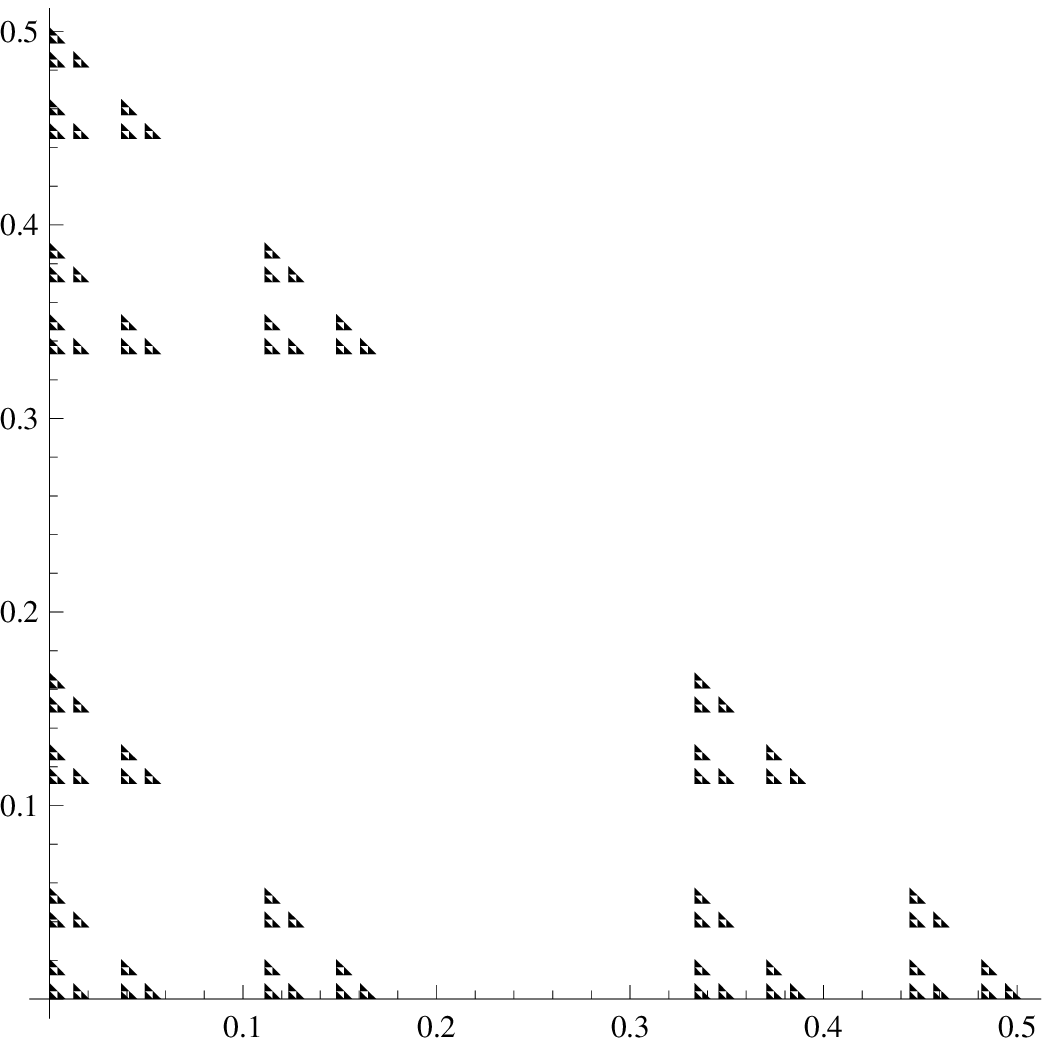}}} \hspace{3cm}
\vbox{\hbox{\epsfxsize 2.5cm\epsfbox{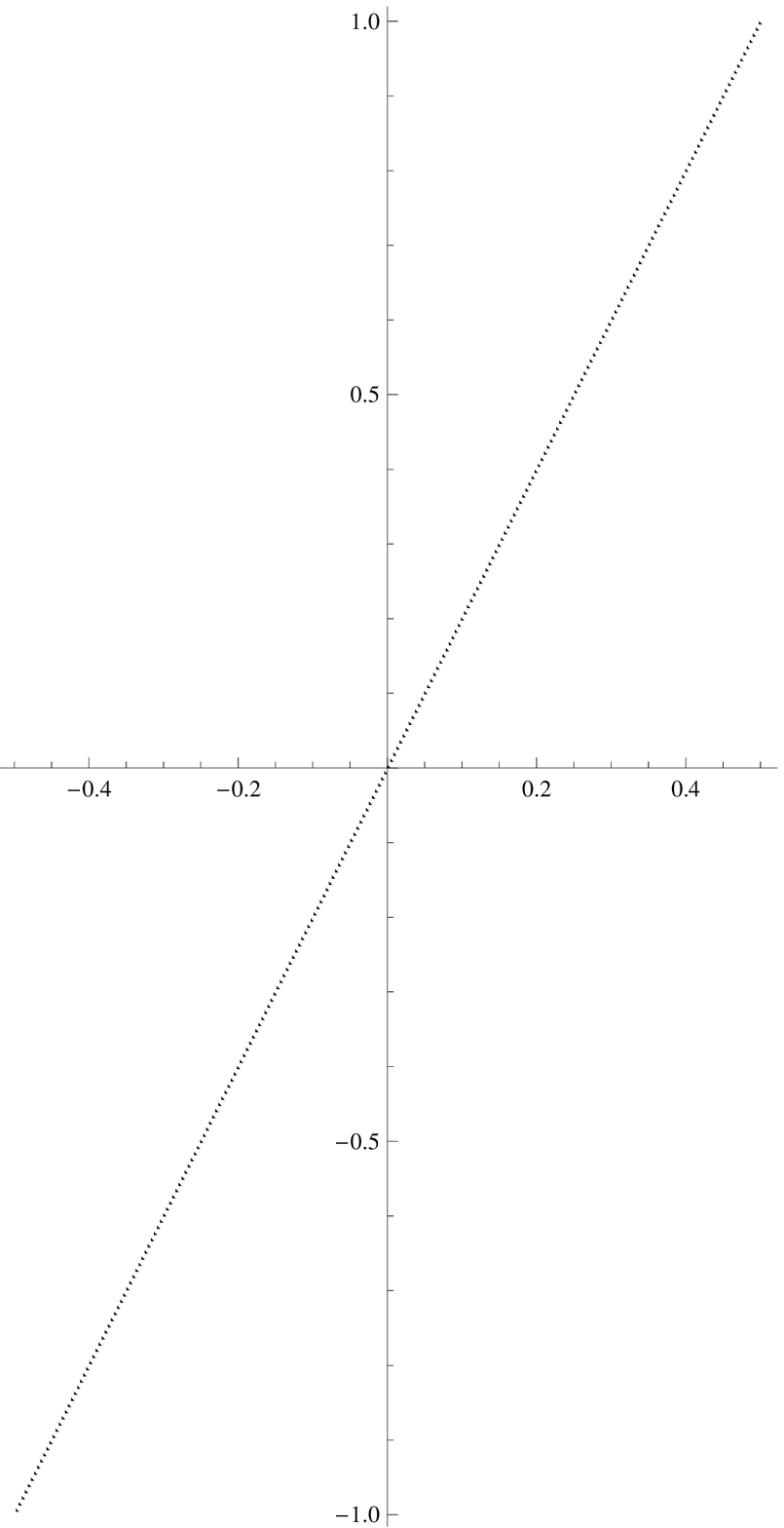}}}}
\caption{ The attractors $X(B)$ and $X(L)$}
\end{figure}

We prove that $\Gamma(L)$ is a spectrum for $\mu_B$ but $\Gamma(B)$ is not a spectrum for $\mu_L$.

Set $\zeta:=\zeta_3=e^{2\pi i/3}$. Then the matrix in \eqref{eqi13} is 
$$\frac{1}{\sqrt{3}}\begin{pmatrix}
	1&1&1\\
	1&\zeta&\zeta^2\\
	1&\zeta^2&\zeta
\end{pmatrix}$$
which is the matrix of the Fourier transform on $\bz_3$.

It is easy to see that $X(B)\subset[0,\frac12]\times[0,\frac12]$ and $X(L)\subset\left\{t \begin{pmatrix}
	1\\2
	\end{pmatrix}
:
-\frac12\leq t\leq \frac12\right\}$ contained in a line. 

Even though the dimension here is 2, and Lemma \ref{lem3.3} applies only to dimension 1, we will still be able to use it for the pair $(\mu_B,\Gamma(L))$, since the attractor $X(L)$ is contained in a line and therefore $\chi_B$ has only finitely many zeros in $X(L)$ (see \cite{DJ07b}).

Note that
$$\chi_B \begin{pmatrix}
	x\\y
\end{pmatrix}
=\frac13(1+e^{2\pi ix}+e^{2\pi i y}),\quad \chi_L \begin{pmatrix}
	x\\y
\end{pmatrix}
=\frac13(1+e^{2\pi i (x+2y)}+e^{-2\pi i(x+2y)}).$$

 Then $\left|\chi_B \begin{pmatrix}
	x\\y
\end{pmatrix}
 \right|=1$ iff $x,y\in\bz$, and $\left|\chi_L \begin{pmatrix}
	x\\y
\end{pmatrix}
\right|=1$ iff $x+2y\in\bz$.

This implies that there are no extreme $B$-cycles in $X(L)$ and so $\Gamma(L)$ is an ONB for $\mu_B$. Also, we have that $\begin{pmatrix}
	0\\1/2
\end{pmatrix}
$ is a fixed point for the map $\tau_{(0,1)^T}$ and $\left|\chi_L \begin{pmatrix}
	0\\1/2
\end{pmatrix}
\right|=1$, and therefore $\left\{ \begin{pmatrix}
	0\\1/2
\end{pmatrix}
\right\}$ is a non-trivial $L$-extreme cycle. Hence $\Gamma(B)$ is not an ONB for $\mu_L$.

\end{example}

The next proposition will show that if the set $L'$ is an integer  multiple of the set $L$ then it can only have more $B$-extreme cycles. This implies that $\Gamma(L')$ will have fewer chances of being an ONB.

\begin{proposition}\label{pr3.8}
Suppose $(R,B,L)$ and $(R,B,L')$ are as specified in Definition \ref{def2.5} and $L'=qL$ for some non-zero integer $q$. If there are some non-trivial $B$-extreme cycles in $X(L)$ then there are non-trivial $B$-extreme cycles in $X(L')$. Thus if the dimension is $1$, and if $\Gamma(L)$ is not an ONB then $\Gamma(L')$ is not an ONB either.
\end{proposition}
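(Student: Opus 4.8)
The plan is to transport a non-trivial $B$-extreme $L$-cycle to a non-trivial $B$-extreme $L'$-cycle by exploiting the homogeneity of the affine systems $(\sigma_l)$ under scaling of the translation vectors, combined with the integrality conclusion of Lemma \ref{lem3.4}. First I would record the scaling behavior of the maps. Since $\sigma_l(x)=(R^T)^{-1}(x+l)$ and $L'=qL$, the map attached to the letter $ql\in L'$ is $\sigma'_{ql}(x)=(R^T)^{-1}(x+ql)$, and a one-line computation gives $\sigma'_{ql}(qx)=q\,\sigma_l(x)$. Hence if $C$ is the $L$-cycle generated by a word $w=(l_1\dots l_n)$ with cycle point $x(w)$, then the $L'$-cycle generated by the dilated word $w'=(ql_1\dots ql_n)$ has cycle point $q\,x(w)$: this is visible either from the explicit formula $x(w)=(I-(R^T)^{-n})^{-1}x_n$, since $x_n$ scales by $q$, or from the fixed-point characterization of $x(w)$. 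Iterating the homogeneity relation along the orbit then shows that the entire $L'$-cycle equals the pointwise dilate $qC$. Because $q\neq 0$, the set $qC$ is non-trivial exactly when $C$ is, so $0\neq x\in C$ yields $0\neq qx\in qC$.

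The step I expect to be the only real subtlety is checking that $B$-extremality survives the dilation, since in general $|\chi_B(x)|=1$ does \emph{not} force $|\chi_B(qx)|=1$. What rescues the argument is precisely Lemma \ref{lem3.4}: because $C$ is $B$-extreme, every $x\in C$ satisfies $b\cdot x\in\bz$ for all $b\in B$. As $q$ is an integer, $b\cdot(qx)=q\,(b\cdot x)\in\bz$ for every $b\in B$, so each summand $e^{2\pi i b\cdot(qx)}=1$ and therefore $\chi_B(qx)=1$, giving $|\chi_B(y)|=1$ for all $y\in qC$. Thus $qC$ is a $B$-extreme cycle, and it lies in $X(L')$ since it is an $L'$-cycle. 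This proves the first assertion in every dimension $d$, and it is the integrality output of Lemma \ref{lem3.4} (rather than the weaker modulus-one condition) that makes the passage from $\chi_B(x)$ to $\chi_B(qx)$ legitimate.

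For the ONB statement in dimension $d=1$, I would then invoke Lemma \ref{lem3.3}(i), which applies to both triples $(R,B,L)$ and $(R,B,L')$ because they share the same $B$, hence the same measure $\mu_B$ and the same space $L^2(\mu_B)$. If $\Gamma(L)$ is not an ONB in $L^2(\mu_B)$, Lemma \ref{lem3.3}(i) supplies a non-trivial $B$-extreme cycle $C$ in $X(L)$; the construction above converts it into the non-trivial $B$-extreme cycle $qC$ in $X(L')$, and a second application of Lemma \ref{lem3.3}(i) to $(R,B,L')$ yields that $\Gamma(L')$ is not an ONB in $L^2(\mu_B)$, as claimed.
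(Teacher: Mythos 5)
Your proposal is correct and follows essentially the same route as the paper's own proof: dilate the cycle by $q$ (using the homogeneity $(R^T)^{-1}(qx+ql)=q\,(R^T)^{-1}(x+l)$ to see $qC$ is an $L'$-cycle), and use the integrality conclusion of Lemma \ref{lem3.4} — not the weaker modulus condition — to see that $b\cdot(qx)\in\bz$, hence $|\chi_B(qx)|=1$, so $qC$ is $B$-extreme. Your write-up is somewhat more explicit than the paper's (non-triviality of $qC$ and the double invocation of Lemma \ref{lem3.3} for the ONB statement are spelled out rather than left implicit), but the substance is identical.
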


\begin{proof}
Let $C$ be a $B$-extreme cycle in $X(L)$. Then, by Lemma \ref{lem3.4}, $b\cdot x\in\bz$ for all $b\in B$ and $x\in C$. Then $b\cdot qx\in\bz$ for $b\in B,x\in C$. Therefore $|\chi_B(qx)|=1$ for all $x\in C$. 
Also if $x_0,x_1\in C$ and $(R^T)^{-1}(x_0+l_0)=x_1$, for some $l_0\in L$, then $(R^T)^{-1}(qx_0+ql_0)=x_1$. This shows that $qC$ is a $B$-extreme cycle in $X(L')$.

\end{proof}

 \section{Transfer operators}
 
 Recall (Corollary \ref{cor2.7}) that each of the affine fractal measures under consideration has a well defined spectral function. Here we show (Lemma \ref{lem5.1}) that the pair of spectral functions corresponding to our paired fractal measures are fixed under an associated pair of transfer operators. Since transfer operators have a rich spectral theory, this throws light on the harmonic analysis of affine fractal measures.

 Let $R,B,L$ in $\br^d$ be as specified in Definition \ref{def2.5}, and assume $0\in B$, $0\in L$. Let $(\tau_b)_{b\in B}$ and $(\tau_l)_{l\in L}$ be the dual IFSs from \eqref{eqi4} and \eqref{eqi16} respectively. 
 
 We will consider the following transfer operators
 \begin{equation}
(T_Bf)(t)=\sum_{l\in L}|\chi_B(\tau_l(t))|^2f(\tau_l(t)),
\label{eqt1}
\end{equation}
and
\begin{equation}
(T_Lf)(t)=\sum_{b\in B}|\chi_L(\tau_b(t))|^2f(\tau_b(t))
\label{eqt2}
\end{equation}

Set
\begin{equation}
E_+(T_B):=\left\{f:\br\rightarrow[0,1] : T_Bf=f, f\in C^1\mbox{ and } f(0)=1\right\}
\label{eqt3}
\end{equation}
and similarly for $E_+(T_L)$. 

\begin{lemma}\label{lem5.1}
Let $T_B$ and $T_L$ be the transfer operators in \eqref{eqt1} and \eqref{eqt2}. Let $\mathbf 1$ be the constant function one. Then
\begin{equation}
\mathbf 1\in E_+(T_B)\cap E_+(T_L),
\label{eqt4}
\end{equation}

\begin{equation}
\sigma_{\Gamma(L)}^{(B)}\in E_+(T_B),
\label{eqt5}
\end{equation}
and
\begin{equation}
\sigma_{\Gamma(B)}^{(L)}\in E_+(T_L).
\label{eqt6}
\end{equation}
Moreover, each space $E_+(T_B)$ and $E_+(T_L)$ is a convex order interval; specifically, the following implication holds: 
\begin{equation}
\mbox{If }f\in E_+(T_B)\mbox{ then } \sigma_{\Gamma(L)}^{(B)}(t)\leq f(t)\leq 1
\label{eqt7}
\end{equation}
and similarly for $E_+(T_L)$. 
\end{lemma}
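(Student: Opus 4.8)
The plan is to verify the three membership statements \eqref{eqt4}--\eqref{eqt6} by direct computation, establishing each of the four defining properties of $E_+(T_B)$ (namely: maps into $[0,1]$, is fixed by the transfer operator, is $C^1$, and takes value $1$ at the origin), and then to deduce the order-interval structure \eqref{eqt7} from the positivity of the transfer operator together with the fixed-point property of $\sigma_{\Gamma(L)}^{(B)}$. The foundational identity I would exploit throughout is the basic scaling relation for the infinite product in Lemma \ref{lem2.4}: since $\widehat\mu_B(t)=\chi_B(R^{-1}t)\,\widehat\mu_B(R^{-1}t)$ (and likewise for $\mu_L$), one gets the one-step recursion $|\widehat\mu_B(t)|^2=|\chi_B(\tau_l(t))|^2|\widehat\mu_B(\tau_l(t))|^2$ after recognizing $\tau_l(t)=(R^T)^{-1}(t+l)$ as the map conjugate to the scaling inside $\widehat\mu_B$.

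First I would treat $\mathbf 1\in E_+(T_B)$: this reduces to the identity $\sum_{l\in L}|\chi_B(\tau_l(t))|^2=1$, which is exactly the statement that the row-$0$ (equivalently, the Parseval/unitarity) relation of the Hadamard matrix \eqref{eqi13} holds pointwise; I would derive it by writing out $|\chi_B(\tau_l(t))|^2$ as a double sum over $B$ and summing over $l\in L$, using \eqref{eqi12} and the column-orthogonality of the unitary matrix to collapse the $l$-sum to a Kronecker delta. The value $\mathbf 1(0)=1$ and the $C^1$ (indeed entire) regularity of the constant function are immediate, so the only content here is the normalization identity. Next, for \eqref{eqt5}, I would start from the definition \eqref{eq2.7.8}, substitute the one-step recursion for $|\widehat\mu_B|^2$, and reorganize the sum over $\gamma\in\Gamma(L)$ according to the ``last digit'' $l\in L$ in the representation \eqref{eqi15}; the key combinatorial fact is that $\Gamma(L)=\bigcup_{l\in L}(R^T\Gamma(L)+l)$ with the union essentially disjoint (this is the large-scale analogue of Corollary \ref{cor2.6}), so the reindexing $\gamma=R^T\gamma'+l$ turns $\sigma_{\Gamma(L)}^{(B)}(t)$ into exactly $(T_B\sigma_{\Gamma(L)}^{(B)})(t)$. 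The bound $\sigma_{\Gamma(L)}^{(B)}\le 1$ and the entire-analyticity are already supplied by Corollary \ref{cor2.7}, and $\sigma_{\Gamma(L)}^{(B)}(0)=1$ follows because $0\in\Gamma(L)$ and $\widehat\mu_B(0)=1$ while all other terms vanish by the orthogonality in Proposition \ref{pr2.5}; this gives \eqref{eqt5}, and \eqref{eqt6} is identical with the roles of $B$ and $L$ interchanged.

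For the order-interval claim \eqref{eqt7}, I would use the positivity of $T_B$: since the weights $|\chi_B(\tau_l(t))|^2$ are nonnegative and sum to $1$, $T_B$ is a positive operator fixing $\mathbf 1$, and by definition every $f\in E_+(T_B)$ satisfies $f\le 1=\mathbf 1$. Writing $g:=\mathbf 1-f\ge 0$, positivity gives $T_B^n g\ge 0$, so $f=T_B^nf\le\mathbf1$ is preserved; the lower bound $\sigma_{\Gamma(L)}^{(B)}\le f$ I would obtain by showing $\sigma_{\Gamma(L)}^{(B)}$ is the \emph{minimal} nonnegative fixed point with $f(0)=1$, by iterating the recursion and a dominated-convergence / monotone argument: iterating $T_B$ on $f$ produces a sum of the first $n$ ``levels'' of the $\Gamma(L)$-series plus a nonnegative remainder involving $f$ evaluated at deeply contracted points, and letting $n\to\infty$ the explicit partial sums converge up to $\sigma_{\Gamma(L)}^{(B)}$ while the remainder stays nonnegative, forcing $f\ge\sigma_{\Gamma(L)}^{(B)}$. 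I expect the main obstacle to be this last minimality step: one must control the remainder term $\sum (\text{weights})\,f(\text{deep point})$ uniformly, and argue that the contracted arguments accumulate at the cycle structure so that the partial sums genuinely exhaust $\Gamma(L)$ rather than leaving a positive gap; this is where the compactness of $\supp\mu_B=X(B)$ and continuity of $f$ near the fixed points of the $\tau_l$ must be invoked carefully, and it is the place where in higher dimensions the clean conclusion can fail.
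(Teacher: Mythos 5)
Your proposal is correct, and where the paper actually writes out an argument you follow the same route: the fixed-point identity \eqref{eqt5} via the refinement $\Gamma(L)=L+R^T\Gamma(L)$ (disjoint, by the uniqueness argument of Corollary \ref{cor2.6}), and $\sigma_{\Gamma(L)}^{(B)}(0)=1$ via the orthogonality of $E(\Gamma(L))$ (the paper phrases this through the projection $P_L$ of Corollary \ref{cor2.7}, which is the same computation). For \eqref{eqt4} and \eqref{eqt7} the paper gives no proof at all, only citations to \cite{DJ06b,DJ08}; your Hadamard-unitarity computation of $\sum_{l\in L}|\chi_B(\tau_l(t))|^2=1$ and your minimality-by-iteration argument are precisely the standard arguments from those references, so in effect you have filled in what the paper outsources.

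Two corrections are needed, both about precision rather than strategy. First, your ``one-step recursion'' $|\widehat\mu_B(t)|^2=|\chi_B(\tau_l(t))|^2|\widehat\mu_B(\tau_l(t))|^2$ is false for $l\neq 0$: Lemma \ref{lem2.4} gives $\widehat\mu_B(s)=\chi_B((R^T)^{-1}s)\,\widehat\mu_B((R^T)^{-1}s)$, and $\chi_B((R^T)^{-1}(t+l))\neq\chi_B((R^T)^{-1}t)$ in general, since $b\cdot(R^T)^{-1}l=R^{-1}b\cdot l$ are exactly the non-integer Hadamard phases. What the reindexing $\gamma=l+R^T\gamma'$ really produces is the factor $\chi_B(\tau_l(t)+\gamma')$, and to pull $|\chi_B(\tau_l(t))|^2$ out of the inner sum over $\gamma'$ you must observe that $\chi_B$ is $\Gamma(L)$-periodic, i.e.\ $b\cdot\gamma'\in\bz$ for all $b\in B$, $\gamma'\in\Gamma(L)$, which is exactly condition \eqref{eqi12}; without this the computation does not close (the paper's own display uses it silently as well). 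Second, the ``main obstacle'' you anticipate in the minimality step does not exist, and your diagnosis of it is wrong: extreme cycles play no role in \eqref{eqt7}. For $f\in E_+(T_B)$ expand $f=T_B^{n+m}f$ as a sum of nonnegative terms over words $(l_1,\dots,l_{n+m})\in L^{n+m}$ and simply discard every word whose last $m$ letters are not all $0$. In each kept term the argument of $f$ is $(R^T)^{-m}y$ with $y=\tau_{l_n}\cdots\tau_{l_1}(t)$, which tends to $0$, so $f$ there tends to $f(0)=1$ by continuity, while the appended weight $\prod_{j=1}^m|\chi_B((R^T)^{-j}y)|^2$ tends to $|\widehat\mu_B(y)|^2$ by Lemma \ref{lem2.4}; letting $m\to\infty$ (a finite sum over $(l_1,\dots,l_n)$) gives $f(t)\geq\sum_\gamma|\widehat\mu_B(t+\gamma)|^2$ over all $\gamma$ with at most $n$ digits, and $n\to\infty$ gives $f\geq\sigma_{\Gamma(L)}^{(B)}$ because $\Gamma(L)$ is by definition the increasing union of these finite-digit sets. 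No compactness of $X(B)$, no uniform remainder estimate, and no cycle analysis enters, and the argument is dimension-independent: what fails for $d>1$ is the sufficiency of the extreme-cycle test in Lemma \ref{lem3.3} (i.e.\ whether $\sigma_{\Gamma(L)}^{(B)}\equiv 1$), not the order-interval property \eqref{eqt7}.
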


\begin{proof}
Details are included in \cite{DJ06b,DJ07a,DJ07b,DJ07c}
We sketch the proof of \eqref{eqt5}. Using the definition of $\Gamma(L)$ we get 
\begin{equation}
\Gamma(L)=L+R^T\Gamma(L).
\label{eqt8}
\end{equation}
Substitution into \eqref{eq2.7.8} then yields 
$$\sigma_{\Gamma(L)}^{(B)}(t)=\sum_{l\in L}\sum_{\gamma\in\Gamma(L)}|\widehat\mu_B(t+l+R^T\gamma)|^2=
\sum_{l\in L}|\chi_B(\sigma_l(t))|^2\sum_{\gamma\in\Gamma(L)}|\widehat\mu_B((R^T)^{-1}(t+l)+\gamma)|^2$$
$$=\sum_{l\in L}|\chi_B(\tau_l(t))|^2\sigma_{\Gamma(L)}^{(B)}(\tau_l(t))=T_B(\sigma_{\Gamma(L)}^{(B)})(t)$$
as claimed.

Verification of $\sigma_{\Gamma(L)}^{(B)}(0)=1$: this is a direct computation using \eqref{eq2.7.8}, and the Hadamard axiom in \eqref{eqi13}.
See also the formula
\begin{equation}
\sigma_{\Gamma(L)}^{(B)}(t)=\ip{e_0}{U(t)P_LU(-t)e_0}_{L^2(\mu_B)}
\label{eqt9}
\end{equation}
from Corollary \ref{cor2.7}.

Indeed, setting $t=0$ into \eqref{eqt9} yields 
$$\sigma_{\Gamma(L)}^{(B)}(0)=\ip{e_0}{U(0)P_LU(0)e_0}_{L^2(\mu_B)}=\ip{e_0}{P_Le_0}_{L^2(\mu_B)}=\ip{e_0}{e_0}_{L^2(\mu_B)}=1$$
since $e_0$ is in the range of the projection $P_L$. Recall the assumption $0\in L$.

For the proof of \eqref{eqt7}, see \cite{DJ06b,DJ08}.

\end{proof}

\section{  The Cantor measure with scale-similarity factor 4}\label{canto}

 In this section we revisit a particular fractal measure $\mu$. It is a Cantor measure supported by a compact fractal contained in the real line. Its harmonic analysis was studied first in \cite{JP98} where the authors proved that it has a Fourier basis. In addition to the realization of $\mu$ as a Hutchinson measure, it is also (see \cite{JP98}) the result of a recursive Cantor construction with gap-spacing and subdivision 4.

     Here is the specific algorithm: begin with the unit interval [0, 1], subdivide by 4, and leave two gaps. Then renormalize the restriction of Lebesgue measure to two of the four subintervals that are retained. Now continue recursively. The resulting sequence of measures has a unique limit. It is the Cantor measure with fractal dimension $\frac12$.

Indeed, the limit measure $\mu$ has Hausdorff dimension $\frac12$.  And the Hausdorff dimension coincides with the scaling dimension. The authors of \cite{JP98} showed that $L^2(\mu)$ has an explicit orthonormal basis (ONB) of complex exponentials. The ONB found in \cite {JP98} is here called   $\Gamma(\{0, 1\})$.

     In the section below we explore the possibility of scaling the earlier known ONBs by integral multiples $p$. The fact that such a scaling may even lead to new ONBs is rather surprising as the scaled sets become more sparse with increasing values of the integer $p$.  Surprisingly the arithmetic properties of $p$ explain and account for when the result is again an ONB. For example we show that if $p$ is divisible by 3, then $\Gamma(\{0, p\})$ is not an ONB in $L^2(\mu)$. When $p$ does not contain the prime factor 3, we show that  $\Gamma(\{0, p\})$ may or may not an ONB. For example we prove (Proposition \ref{pr5.1}) that the case $p=5^k$ is affirmative, i.e., that $\Gamma(\{0, 5^k\})$ is an ONB in $L^2(\mu)$.

\begin{proposition}\label{pr5.1}
Let $R=4$, $B=\{0,2\}$. Then for every integer $k\geq 0$ and $L(5^k):=\{0,5^k\}$, the set $\Gamma(L(5^k))$ is an ONB for $L^2(\mu_B)$.
\end{proposition}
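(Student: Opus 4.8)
By Lemma \ref{lem3.3}, since $d=1$, proving that $\Gamma(L(5^k))$ is an ONB in $L^2(\mu_B)$ is equivalent to showing that the only $B$-extreme cycle in $X(L(5^k))$ is the singleton $\{0\}$. Here $R=4$, $B=\{0,2\}$, and $L=L(5^k)=\{0,5^k\}$, so the dual IFS is $\sigma_0(x)=x/4$, $\sigma_{5^k}(x)=(x+5^k)/4$, and the attractor is
\[
X(L(5^k))=\left\{\sum_{j=1}^\infty 4^{-j}\ell_j : \ell_j\in\{0,5^k\}\right\}\subset\left[0,\tfrac{5^k}{3}\right].
\]
The relevant multiplier is $\chi_B(t)=\tfrac12(1+e^{2\pi i\cdot 2t})$, so by the triangle-inequality argument of Lemma \ref{lem3.4}, $|\chi_B(x)|=1$ iff $2x\in\bz$, i.e.\ iff $x\in\tfrac12\bz$. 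Thus the first step is to reduce the problem to a purely arithmetic question: I must show that the only point $x\in X(L(5^k))$ lying in $\tfrac12\bz$ which is also a cycle point for $(\sigma_0,\sigma_{5^k})$ is $x=0$.

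\emph{Reduction to cycle points.} First I would identify the possible $B$-extreme cycle points directly. A finite cycle point $x(w)$ for the word $w=(\ell_1\dots\ell_n)$ is a fixed point of $\sigma_{\ell_1}\cdots\sigma_{\ell_n}$, hence a rational of the form $x(w)=(4^n-1)^{-1}\sum_{j=1}^n 4^{n-j}\ell_j$ with each $\ell_j\in\{0,5^k\}$. For $x(w)$ to be $B$-extreme I need every point in its orbit to lie in $\tfrac12\bz$; in particular $2x(w)\in\bz$, which forces $(4^n-1)\mid 2\cdot 5^k\sum_{j}4^{n-j}\epsilon_j$ for the corresponding $0/1$-string $\epsilon_j$. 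The key number-theoretic input is that $\gcd(4^n-1,\,2\cdot 5^k)$ is a divisor of $\gcd(4^n-1,2\cdot 5^k)$, and since $4^n-1$ is odd, this equals $\gcd(4^n-1,5^k)$. I would then analyze $4^n-1\pmod{5}$: because the multiplicative order of $4$ modulo $5$ is $2$ (as $4\equiv -1$), we have $5\mid 4^n-1$ iff $n$ is even, and more refined lifting-the-exponent estimates control the exact power of $5$ dividing $4^n-1$.

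\emph{Closing the argument.} The cleanest route is likely to combine the containment $X(L(5^k))\subset[0,5^k/3]$ with the half-integer constraint: any genuine (non-trivial) cycle point is a \emph{nonzero} element of $X(L(5^k))\cap\tfrac12\bz$, i.e.\ a value in $\{1/2,1,3/2,\dots\}$ bounded above by $5^k/3$, and I must rule each of these out as a cycle point whose full orbit stays half-integral. I expect the decisive step to be showing that the extremality condition ``\emph{every} point of the orbit lies in $\tfrac12\bz$'' together with the divisibility by powers of $5$ is incompatible unless the word $w$ is the all-zero word. Concretely, one shows that if $x$ is a half-integer cycle point then applying the inverse branches (multiply by $4$, subtract $\ell\in\{0,5^k\}$) keeps one inside the cycle while preserving the half-integer property, and the arithmetic of $5^k$ versus the denominator $4^n-1$ forces the orbit to collapse to $0$.

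\textbf{Main obstacle.} The genuinely hard part is the number-theoretic bookkeeping: controlling exactly when $(4^n-1)\mid 2\cdot5^k\cdot(\text{integer built from a }0/1\text{-string})$ and verifying that the induced cyclic structure admits no nonzero half-integral invariant orbit for \emph{any} $k$. The appearance of the specific prime $5$ (whose powers share no common factor with $4^n-1$ beyond what the order of $4$ mod $5$ dictates) is exactly what makes the case affirmative, in contrast to the factor $3$ flagged in the introduction; so the crux is to pin down this coprimality/divisibility dichotomy precisely and show it eliminates all non-trivial extreme cycles uniformly in $k$.
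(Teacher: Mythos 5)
Your reduction via Lemma \ref{lem3.3} and the observation that $B$-extremality forces every cycle point into $\tfrac12\bz$ are correct, and they coincide with the opening steps of the paper's proof. But everything after that is a plan rather than an argument, and the plan as framed cannot be completed. The difficulty is that the ``coprimality/divisibility dichotomy'' you identify as the crux does not exist: since $2$ is a primitive root modulo $5^k$, the order of $4$ in $(\bz/5^k\bz)^*$ is exactly $2\cdot 5^{k-1}$, so $5^k$ genuinely divides $4^n-1$ whenever $2\cdot 5^{k-1}\mid n$. Hence for suitable word lengths $n$ the divisibility $(4^n-1)\mid 2\cdot 5^k\sum_j 4^{n-j}\epsilon_j$ is not obstructed by any gcd consideration, and half-integral candidate cycle points are not ruled out; no amount of bookkeeping about $\gcd(4^n-1,5^k)$ alone will eliminate them.

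What actually closes the argument in the paper is a combination of three steps absent from your sketch. First, an induction on $k$: one upgrades half-integrality to integrality of cycle points (your numerator-parity idea does work here), and then, if the largest power $5^l$ dividing $4^n-1$ has $l<k$, the congruences $(4^n-1)x_i\equiv 0\pmod{5^k}$ force $5\mid x_i$ for every cycle point, so dividing the whole cycle by $5$ produces a $B$-extreme cycle for $L=\{0,5^{k-1}\}$, contradicting the inductive hypothesis. Second, in the remaining case $5^k\mid 4^n-1$, the lifting-the-exponent congruence $2^{4\cdot 5^k}\equiv 1+3\cdot 5^{k+1}\pmod{5^{k+2}}$ shows the order of $2$ modulo $5^k$ is divisible by $5^{k-1}$, so $2^{2n}\equiv 1\pmod{5^k}$ forces $n\geq 5^{k-1}$: the cycle must be very long. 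Third, a counting argument caps how long it can be: all integer cycle points lie in $[0,5^k/3]$ and must satisfy $x_i\equiv 0\pmod 4$ or $x_i\equiv -5^k\pmod 4$ (since each $x_i+l_i$ is divisible by $4$), leaving fewer than $5^{k-1}$ admissible points, a contradiction. Your proposal invokes lifting-the-exponent only to control the power of $5$ in $4^n-1$, i.e.\ for the wrong purpose; the decisive tension is between a forced \emph{lower} bound on the cycle length and an \emph{upper} bound on the number of available cycle points, and nothing in your sketch produces either bound.
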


\begin{proof}
Using Lemma \ref{lem3.3}, we will show that for any integer $k\geq0$, there are no non-trivial $B$-extreme cycles in $X(L(5^k))$. We will do this by induction on $k$. For $k=0,1$, this is easy to check, see the conditions below.

Fix $k\geq2$. Let $x_0\in X(L(5^k))$ be a point in a $B$-extreme cycle, $x_0\neq 0$. 
We have 
$$\chi_B(t)=\frac{1}{2}(1+e^{2\pi i2\cdot t}).$$
Then $|\chi_B(x_0)|=1$ implies that $x_0\in\frac12\bz$.

We claim that any such $B$-extreme cycle point must be in $\bz$. But if $x_0$ is not in $\bz$, it must be of the form $x_0=a/2$ with $a$ odd. Let $x_1$ be the next point in the cycle, so $x_1=(x_0+l_0)/4$ for some $l_0\in\{0,5^k\}$. Since $x_1$ is a $B$-extreme cycle point we also have $x_1\in\frac12\bz$. If $l_0=5^k$ then $x_1=(a+2\cdot 5^k)/8$. Since the numerator is odd, this cannot be in $\frac12\bz$. 
If $l_0=0$ we must have $\frac12\bz\ni x_1=\frac{a}{8}$ which is again impossible. Thus $x_0\in\bz$.

Let $x_0,x_1,\dots, x_{n-1}$ be the points in this $B$-extreme cycle,  $x_0$ and let $l_0,\dots, l_{n-1}\in \{0,5^k\}$ such that 
\begin{equation}
\frac{x_0+l_0}{4}=x_1, \frac{x_1+l_1}4=x_2,\dots, \frac{x_{n-2}+l_{n-2}}4=x_{n-1}, \frac{x_{n-1}+l_{n-1}}{4}=x_0.
\label{eq5.1.0}
\end{equation}

Then 
$$x_0\equiv 4 x_1\mod 5^k, x_1\equiv 4 x_2\mod 5^k,\dots, x_{n-1}\equiv 4x_0\mod 5^k.$$
This implies that 
\begin{equation}
(4^n-1)x_i\equiv 0\mod 5^k,\quad(i\in\{0,\dots,n-1\}).
\label{eq5.1.1}
\end{equation}

Let $l$ be the largest power such that $5^l$ divides $4^n-1$. In the case when $l<k$, it follows from \eqref{eq5.1.1} that all elements of the cycle $x_i$ must be divisible by $5$. Then $x_i=5y_i$ for some $y_i\in\bz$. Dividing \eqref{eq5.1.0} by 5, it follows that $y_i$ form a $B$-extreme cycle for $L=\{0,5^{k-1}\}$, which contradicts the inductive hypothesis.

So we can assume $l=k$ so $4^n-1$ is divisible by $5^k$. We will prove that in this case $n$ is divisible by $5^{k-1}$. 
For this it is easy to prove by induction that for any $k\geq 0$:
\begin{equation}
2^{4\cdot 5^k}\equiv 1+3\cdot5^{k+1}\mod 5^{k+2}
\label{eq5.1.2}
\end{equation}
For $k=0$ this is clear. And assuming \eqref{eq5.1.2} for $k$, write $2^{4\cdot 5^k} 1+3\cdot5^{k+1}+5^{k+2}t$ then raise to the fifth power. Using the multinomial formula and keeping track of the terms not divisible by $5^{k+3}$, it follows that \eqref{eq5.1.2} is true for $k+1$ as well.

Now consider the multiplicative group $(\bz_{5^k})^*$ of elements in $\bz_{5^k}$ that are relatively prime to $5^k$. 
Equation \eqref{eq5.1.2} for $k-1$, shows that the order of $2$ in this group divides $4\cdot 5^{k-1}$, and \eqref{eq5.1.2} for $k-2$ shows that the order of $2$ in this group cannot divide $4\cdot5^{k-2}$. Therefore the order of $2$ is divisible by $5^{k-1}$. 

Then, if $2^{2n}\equiv 1\mod 5^k$ this implies that the order of 2 must divide $2n$ so $5^{k-1}$ divides $n$.

But if $5^{k-1}$ divides $n$, then $n\geq 5^{k-1}$. On the other hand $n$ is the length of the cycle. This means that we have at least $5^{k-1}$ points in $\bz\cap X(L)$. 

However, $X(L)$ is contained in the interval $[0, \sum_{i=0}^\infty 5^k/4^i]=[0,5^k/3]$. There are at most $5^k/3$ non-zero integers in this interval. But, as above, for all points in the cycle we must have $x_i+l_i$ divisible by $4$, so $x_i\equiv 0\mod 4$ or $x_i\equiv -5^k\mod 4$. These are only 2 equivalence classes mod 4, so the number of integers  in $X(L)$ that can be on an extreme cycle is at most $(5^k/3)/2<5^{k-1}$. But since the length of the cycle , $n$, is at least $5^{k-1}$, we reach a contradiction. 

 Thus, there are no non-trivial $B$-extreme cycles, and therefore $\Gamma(\{0,5^k\})$ is an ONB. 

\end{proof}
\begin{remark}
Proposition \ref{pr5.1} shows that an ONB for a fractal measure can have the corresponding fractional upper Beurling density arbitrarily small.

The fractional upper Beurling density is defined as follow (see \cite{CKS08}): for a discrete subset $\Lambda$ of $\br^d$, and for $\alpha>0$, the $\alpha$-upper Beurling density of $\Lambda$ is defined by 
$$\mathcal D_\alpha^+(\Lambda)=\limsup_{h\rightarrow\infty}\sup_{x\in\br^d}\frac{\#(\Lambda\cap(x+h[-1,1]^d))}{h^\alpha}$$
and the upper Beurling dimension of $\Lambda$ is defined by 
$$\dim^+(\Lambda)=\sup\{\alpha>0 : \mathcal D_\alpha(\Lambda)>0\}.$$

It was proved in \cite{DHSW09} that, under some mild assumptions, for any set $\Lambda$ such that the exponentials $E(\Lambda):=\{e_\lambda : \lambda\in\Lambda\}$ form a Bessel sequence in $L^2(\mu)$ (in particular ONBs), the upper Beurling dimension is equal to the Hausdorff dimension which in this case is $\ln 2/\ln 4=1/2$, and the $1/2$-upper Beurling density $\mathcal D_{1/2}^+(\Lambda)$ is finite. 

Thus we have $\mathcal D_{1/2}^+(\Gamma_1)<\infty$, where $\Gamma_1:=\Gamma(\{0,1\})$. We check also that $\mathcal D_{1/2}^+(\Gamma_1)>0$.

Take $x=0$ and $h=\sum_{k=0}^{n-1}4^k=(4^n-1)/3$ in the definition of the Beurling density. Then 
$\# (\Gamma_1\cap[-h,h])=2^n$ (since we have two digits in $\{0,1\}$ and $n$ positions to write the elements in $\Gamma_1$). Then 
$$\mathcal D_{1/2}^+(\Gamma_1)\geq \limsup_{n\rightarrow\infty}\frac{2^n}{\left(\frac{4^n-1}{3}\right)^{1/2}}=\sqrt{3}>0.$$

Thus $$0<\mathcal D_{1/2}^+(\Gamma_1)<\infty.$$
Also, it is easy to see that 
$$\mathcal D_{1/2}^+(5^k\Gamma_1)=\frac{1}{(5^k)^{1/2}}\mathcal D_{1/2}^+(\Gamma_1),$$
and therefore 
$$\lim_{k\rightarrow\infty}\mathcal D_{1/2}^+(5^k\Gamma_1)=0$$

\end{remark}

Let $d=1$, $R=4$ and $B=\{0,2\}$ so the measure $\mu_B$ satisfies 
\begin{equation}
\widehat\mu_B(t)=e^{i\frac{2\pi t}{3}}\prod_{k=1}^\infty\cos\left(\frac{2\pi t}{4^k}\right)
\label{eqp.1}
\end{equation}
Set 
\begin{equation}
\Gamma:=\Gamma_1:=\left\{\sum_{k=0}^na_k4^k : a_k\in\{0,1\}, n\in\bz_+\right\}
\label{eqp.2}
\end{equation}
i.e., $\Gamma=\Gamma(L_1)$ with $L_1=\{0,1\}$.

\begin{proposition}\label{pr5.3}
Let $p\in\bz_+$, $p> 1$, and $L_p:=\{0,p\}$. Then the following conditions are equivalent:
\begin{enumerate}
\item $\Gamma(L_p)=p\Gamma_1$ is orthogonal (not necessarily ONB) in $L^2(\mu_B)$;
\item $\Gamma(B)=2\Gamma_1$ is orthogonal in $L^2(\mu_{L_p})$.
\item $p$ is odd.  

\end{enumerate} 

\begin{proof}
See Proposition \ref{pr2.5}. 
\end{proof}

\end{proposition}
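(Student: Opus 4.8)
The plan is to read this as a special case of the general duality in Proposition \ref{pr2.5}, with everything collapsing onto the single $2\times2$ Hadamard matrix attached to $(R,B,L_p)$ and onto the zero set of $\widehat{\mu_B}$. First I would record the structural identifications. Since $R^T=R=4$ is scalar and $L_p=p\{0,1\}$, $B=2\{0,1\}$, definitions \eqref{eqi14} and \eqref{eqi15} give $\Gamma(L_p)=p\Gamma_1$ and $\Gamma(B)=2\Gamma_1$, as asserted. The integrality condition \eqref{eqi12} holds for every $p$ because $R^kb\cdot l=4^kbl\in\bz$. The only ingredient of Definition \ref{def2.5} that is sensitive to $p$ is the unitarity of the matrix \eqref{eqi13}, which here is $\frac{1}{\sqrt2}\left(\begin{smallmatrix}1&1\\1&(-1)^p\end{smallmatrix}\right)$, the sole nontrivial exponent being $R^{-1}\cdot2\cdot p=p/2$. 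This matrix is unitary exactly when $(-1)^p=-1$, i.e.\ when $p$ is odd.

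The implication (iii)$\Rightarrow$(i),(ii) is then immediate: if $p$ is odd, the triple $(R,B,L_p)$ satisfies all hypotheses of Definition \ref{def2.5}, so Proposition \ref{pr2.5} applies verbatim and yields simultaneously that $\Gamma(L_p)$ is orthogonal in $L^2(\mu_B)$ and that $\Gamma(B)$ is orthogonal in $L^2(\mu_{L_p})$. This effortless half already produces the coincidence of (i) and (ii) on the odd values of $p$.

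For the converse I would work directly with the infinite product of Lemma \ref{lem2.4}. Writing $\chi_B(s)=\tfrac12(1+e^{4\pi i s})$, one has $\chi_B(s)=0$ iff $4s$ is an odd integer, so $\widehat{\mu_B}(t)=\prod_{j\ge1}\chi_B(4^{-j}t)$ vanishes iff $t/4^{\,j-1}$ is an odd integer for some $j\ge1$, that is, iff the $2$-adic valuation $v_2(t)$ is even. A difference of two points of $\Gamma(L_p)=p\Gamma_1$ has the form $p\eta$ with $\eta=\sum_k c_k4^k$, $c_k\in\{-1,0,1\}$; the lowest nonzero digit forces the bracketed cofactor to be odd, so $v_2(\eta)$ is always even and $v_2(p\eta)=v_2(p)+v_2(\eta)$. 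Testing the single pair $\{0,p\}$ already gives $\widehat{\mu_B}(p)\neq0$ whenever $v_2(p)$ is odd, which for even $p$ supplies the desired failure of orthogonality, and the symmetric computation with $\chi_L$ handles (ii).

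The main obstacle is precisely this converse, and it is subtler than the Hadamard heuristic suggests. The vanishing criterion above shows that later factors in the product \eqref{eqi11} can annihilate $\widehat{\mu_B}(p\eta)$ even when the leading row-orthogonality factor $\tfrac12(1+(-1)^p)$ does not, so that deducing non-orthogonality for \emph{every} even $p$ (and not merely for those with $v_2(p)$ odd) is the delicate point and is where I would concentrate the work. I would expect to settle it by the same $2$-adic valuation bookkeeping combined with the boundedness of $X(L_p)$ in a finite interval, exactly in the spirit of the cycle-counting argument of Proposition \ref{pr5.1}; this is the step whose success or failure decides whether the clean statement ``orthogonal $\iff$ $p$ odd'' holds as written.
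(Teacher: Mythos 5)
Your first half is exactly the paper's entire proof: the matrix \eqref{eqi13} for $(R,B,L_p)=(4,\{0,2\},\{0,p\})$ has $e^{2\pi i\cdot 2p/4}=(-1)^p$ as its only nontrivial entry, is unitary precisely when $p$ is odd, and Proposition \ref{pr2.5} then gives (iii)$\Rightarrow$(i) and (iii)$\Rightarrow$(ii). The paper's proof consists of nothing beyond the citation ``See Proposition \ref{pr2.5}'', so it addresses only this direction; the remaining implications, which you correctly flagged as the delicate point, are passed over in silence by the paper.

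The step you could not finish is in fact unfinishable, and your own valuation bookkeeping shows why. You identified the zero set of $\widehat\mu_B$ as the nonzero integers of even $2$-adic valuation $v_2$, and you showed every nonzero difference $\eta$ of elements of $\Gamma_1$ has $v_2(\eta)$ even, so $v_2(p\eta)=v_2(p)+v_2(\eta)$. Now read off the case you left open, $v_2(p)$ even and positive, say $p=4$: then $v_2(p\eta)$ is even for \emph{every} nonzero difference, i.e.\ every difference of $\Gamma(L_p)=p\Gamma_1$ lies in the zero set of $\widehat\mu_B$, so (i) holds although $p$ is even. (Sanity check without any computation: $4\Gamma_1\subset\Gamma_1$, and $\Gamma_1$ is orthogonal --- indeed an ONB, by Proposition \ref{pr5.1} with $k=0$ or by \cite{JP98} --- so its subset $4\Gamma_1$ is orthogonal.) The same count for the dual side --- one finds $\widehat{\mu_{L_p}}(2\eta)=0$ iff $v_2(p\eta)$ is even --- shows (ii) also holds for $p=4$. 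Thus the equivalences your method actually delivers are (i) $\Leftrightarrow$ (ii) $\Leftrightarrow$ ``$v_2(p)$ is even'', and condition (iii) as printed is strictly stronger: Proposition \ref{pr5.3}, together with its one-line proof, fails for $p=4,12,16,20,\dots$. (The statement becomes correct if ``orthogonal'' is replaced by ``maximal orthogonal'': when $v_2(p)=2j>0$ one has $p\Gamma_1\subsetneq(p/4^j)\Gamma_1$ with both sets orthogonal, so maximality fails.) In short, your partial argument is sound, coincides with the paper where the paper says anything at all, and carried one line further it refutes, rather than proves, the converse you were asked to supply.
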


\begin{corollary}\label{cor5.4}
Let $B$ and $L_p=\{0,p\}$ be as in Proposition \ref{pr5.3}, i.e., assume $p$ is odd. Then the following conditions are equivalent:
\begin{enumerate}
	\item The only $B$-extreme cycles in $X(L_p)$ are the trivial singleton $\{0\}$.
	\item The only $L_p$-extreme cycles in $X(B)$ are the trivial singleton $\{0\}$.
	\item The orthogonal sets in Proposition \ref{pr5.3} are ONBs. 
\end{enumerate}

\end{corollary}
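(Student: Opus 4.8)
The plan is to establish the equivalence of the three conditions in Corollary \ref{cor5.4} by combining the abstract ONB criterion of Lemma \ref{lem3.3} with the orthogonality equivalence already proved in Proposition \ref{pr5.3}. Since Proposition \ref{pr5.3} presupposes that $p$ is odd (this is precisely condition (iii) there, which we now assume throughout), both orthogonal systems $\Gamma(L_p)=p\Gamma_1$ and $\Gamma(B)=2\Gamma_1$ are already known to be orthogonal in their respective spaces $L^2(\mu_B)$ and $L^2(\mu_{L_p})$. The task is therefore purely about upgrading orthogonality to completeness, and Lemma \ref{lem3.3} gives the exact geometric obstruction to that upgrade in dimension one.

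First I would invoke Lemma \ref{lem3.3}(i), which states that $\Gamma(L_p)$ is an ONB in $L^2(\mu_B)$ if and only if the only $B$-extreme cycle in $X(L_p)$ is the singleton $\{0\}$. This gives immediately the equivalence of our condition (i) with the statement that $\Gamma(L_p)$ is an ONB. Symmetrically, Lemma \ref{lem3.3}(ii) yields that $\Gamma(B)$ is an ONB in $L^2(\mu_{L_p})$ if and only if the only $L_p$-extreme cycle in $X(B)$ is $\{0\}$, equating our condition (ii) with $\Gamma(B)$ being an ONB. The remaining work is to show that these two ONB conditions are themselves equivalent, and that this common condition is what we have labelled (iii).

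The cleanest route is to argue (iii) $\Leftrightarrow$ (i) and (iii) $\Leftrightarrow$ (ii) separately, where (iii) is the assertion ``the orthogonal sets are ONBs.'' For the direction ``(i) and (ii) are each equivalent to (iii),'' note that by the translation of the extreme-cycle conditions through Lemma \ref{lem3.3}, condition (i) says exactly that $\Gamma(L_p)$ is an ONB in $L^2(\mu_B)$, and condition (ii) says exactly that $\Gamma(B)$ is an ONB in $L^2(\mu_{L_p})$; meanwhile (iii) asserts both of these simultaneously. Thus the genuine content is the claim that, under the standing hypothesis that the systems are orthogonal, one of the two orthogonal systems is an ONB if and only if the other is. I would prove this by examining the symmetry between the two dual IFSs: because $B=2L_1$ and $L_p=pL_1$ both scale the fixed model set $L_1=\{0,1\}$, the extreme-cycle analysis for the pair $(\mu_B,\Gamma(L_p))$ and for the dual pair $(\mu_{L_p},\Gamma(B))$ reduces, via Lemma \ref{lem3.4} and the arithmetic relation $M=4=2\cdot2$, to the same integrality and divisibility constraints on cycle points. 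The self-dual structure of the underlying $\bz_2$-Hadamard matrix \eqref{eqd1} makes the two extreme-cycle problems mirror images of each other.

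The main obstacle I anticipate is verifying that the two extreme-cycle conditions are truly equivalent rather than merely simultaneously satisfiable; the forward implications from Lemma \ref{lem3.3} are automatic, but showing that a non-trivial $B$-extreme cycle in $X(L_p)$ forces a non-trivial $L_p$-extreme cycle in $X(B)$ (and conversely) requires exploiting the precise arithmetic of the odd parameter $p$ together with the scaling factor $R=4$. I expect this to hinge on tracking the divisibility of cycle points modulo $p$ and modulo $2$ in parallel, using that $p$ is odd so that the prime $2$ and the prime factors of $p$ interact independently—exactly the coprimality phenomenon that drove the proof of Proposition \ref{pr5.1}. Once this symmetric divisibility bookkeeping is in place, the chain (i) $\Leftrightarrow$ (iii) $\Leftrightarrow$ (ii) closes, completing the corollary.
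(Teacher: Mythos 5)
Your reduction via Lemma \ref{lem3.3} is exactly right: condition (i) is equivalent to $\Gamma(L_p)$ being an ONB in $L^2(\mu_B)$, condition (ii) to $\Gamma(B)$ being an ONB in $L^2(\mu_{L_p})$, so the whole corollary rests on showing that these two ONB statements are equivalent to each other. But that is precisely the step you never carry out: you announce that it ``requires exploiting the precise arithmetic of the odd parameter $p$,'' to be done by ``tracking the divisibility of cycle points modulo $p$ and modulo $2$ in parallel,'' and then declare the chain closed ``once this symmetric divisibility bookkeeping is in place.'' That is a statement of intent, not an argument, and the mechanism you point to is the wrong one: no coprimality or mod-$p$ analysis in the style of Proposition \ref{pr5.1} is needed (that argument rules out cycles for a particular family of $p$; it does not produce any correspondence between the two cycle problems).

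The paper closes this gap with Theorem \ref{th2.10} (in the one-dimensional form of Corollary \ref{cor2.11}): the scalar $g=p/2$ satisfies $gB=L_p$ and trivially commutes with $R=4$, hence $\sigma_{\Gamma(B)}^{(L_p)}(t)=\sigma_{\Gamma(L_p)}^{(B)}(gt)$ for all $t$; since $g\neq 0$, one spectral function is identically $1$ iff the other is, and by Corollary \ref{cor2.9} this says $\Gamma(B)$ is an ONB in $L^2(\mu_{L_p})$ iff $\Gamma(L_p)$ is an ONB in $L^2(\mu_B)$. Alternatively, the same scaling settles (i) $\Leftrightarrow$ (ii) directly at the level of cycles: the map $\phi(x)=\frac{2}{p}x$ carries $X(L_p)$ onto $X(B)$, intertwines the maps $x\mapsto (x+l)/4$, $l\in\{0,p\}$, with the maps $y\mapsto (y+b)/4$, $b\in\{0,2\}$ (because $\frac{2}{p}\{0,p\}=\{0,2\}$), and converts the extremality conditions into one another, since $|\chi_B(x)|=1 \Leftrightarrow 2x\in\bz \Leftrightarrow p\,\phi(x)\in\bz \Leftrightarrow |\chi_{L_p}(\phi(x))|=1$; thus $\phi$ is a bijection between nontrivial $B$-extreme cycles in $X(L_p)$ and nontrivial $L_p$-extreme cycles in $X(B)$, fixing $\{0\}$. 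Either of these arguments supplies the missing step; without one of them your proposal does not prove the corollary.
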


\begin{proof}
Combine Lemma \ref{lem3.3} and Theorem \ref{th2.10}.

\end{proof}

Here is a list of non-trivial $B$-extreme cycles for all the values of $p\leq 100$. This consists of all odd multiples of 3, and the only such $p$ not divisible by 3 is 85.  For the odd values of $p$ that do not appear in the table, there are no such cycles so $\Gamma(L_p)$ is an ONB. Therefore, the list of positive integers $p$ less than 100  such that $\Gamma(\{0, p\})$ is an ONB in $L^2(\mu_B)$ is:  1, 5, 7, 11, 13, 17, 19, 23, 25, 29, 31, 35, 37, 41, 43, 47, 49, 53, 55, 59, 61, 65, 67, 71, 73, 77, 79, 83, 89, 91, 95, 97. 

$$\begin{tabular}{|c|l|}
\hline
$p$&\mbox{ cycles}\\\hline
3&\{1\}\\\hline
9&\{3\}\\\hline
15&\{4,1\},\{5\}\\\hline
21&\{7\}\\\hline
27&\{9\}\\\hline
33&\{11\}\\\hline
39&\{13\}\\\hline
45&\{12,3\},\{15\}\\\hline
51&\{13,16,4,1\},\{17\}\\\hline
57&\{19\},\\\hline
63&\{16,4,1\},\{17,20,5\},\{21\}\\\hline
69&\{23\}\\\hline
75&\{20,5\},\{25\}\\\hline
81&\{27\}\\\hline
85&\{23,27,28,7\}\\\hline
87&\{29\}\\\hline
93&\{31\}\\\hline
99&\{33\}\\\hline
\end{tabular}
$$

\begin{remark}\label{rem5.5}
  It follows from the analysis of the extreme cycles introduced in Proposition \ref{pr5.3} and \ref{cor5.4} that when $p$ is divisible by 3, then there are non-trivial $B$-extreme cycles in $X(\{0, p\})$. And as a result, the orthogonal sets from Proposition \ref{pr5.3} cannot be total in the respective $L^2(\mu)$-Hilbert spaces. However this implication only goes one way as is illustrated in the table for the case of $p=85$ : Even when $p$ does not have 3 as a prime factor there may indeed be non-trivial $B$-extreme cycles in $X(\{0, p\})$. For $p = 85$, there is a $B$-extreme cycle in $X(\{0, 85\})$ of length four.

\end{remark}

\section{Finite cycles}\label{fini}

  In this section we explore variations in the lists of $B$-extreme cycles for our family of 1D-examples, algebraic and geometric properties; and we identify two classes of such cycles. Our conclusions have direct relevance to orthogonal harmonic analysis of fractal measures as we developed it in the earlier sections of this paper.  But these finite cycles are of independent interest as they have wider significance for factorizations of Mersenne numbers (among other topics); a subject with a multitude of applications: combinatorics, number theory, and encryption; see for example \cite{FFS09, Mu94, MP04, Od78, Vas06}.

    In addition, we mention that finite cycles that may be distributed on particular lattices have uses in other problems in analysis, for example in the study of representations of the Cuntz $C^*$-algebras. In \cite{BrJo99}, the authors introduced a family of permutative representations of each of the Cuntz algebras $\mathcal O_n$. They showed that finite affine cycles distributed on certain associated lattices account for the orthogonal decompositions of these representations. A representation of $\mathcal O_n$ acting in a Hilbert space $\mathcal H$ is said to be permutative if it permutes the vectors in some orthonormal basis for $\mathcal H$. The papers \cite{DJ07b, DJ07c} offer yet other applications of finite cycles with lattice coordinates. For related questions about representations of $\mathcal O_n$, see also \cite{SZ08}.

       Note that in the study of fractals and dynamics, there are multitudes of families of finite cycles; but our present restriction that the particular cycles be $B$-extreme cuts down the number of cases. And for the discussion below, the $B$-extremality of a particular cycle $\mathcal C$ turns out to be equivalent to  requiring $\mathcal C$ to be contained in a certain lattice.

We consider a Hadamard system in Definition \ref{def2.5} of the following form in one dimension, i.e., $d=1$.

{\bf Setting.} Let $R=2n$, $B=\{0,2\}$, $p\in\bz_+$ odd, and set $L=L_{n,p}=\{0, np/2\}$. 
\medskip

Then the conditions in Definition \ref{def2.5} are satisfied and the Hadamard matrix is 
$$\frac{1}{\sqrt2} \begin{pmatrix}
	1&1\\1&-1
\end{pmatrix}
$$

In this section we compute some special $\tau_L$-cycles which are $B$-extreme. This is of interest since the ONB condition in Lemma \ref{lem3.3} holds iff the only $B$-extreme $\tau_L$-cycle is the singleton $\{0\}$.

Recall from Lemma \ref{lem3.4} that a finite $\tau_L$-cycle $\mathcal C$ is $B$-extreme iff $\mathcal C\subset\frac12\bz$, see also Definition \ref{def3.2}.

Set 
\begin{equation}
\tau_0(t)=\frac{t}{2n}\mbox{ and } \tau_1(t)=\frac{t+\frac{np}2}{2n}=\frac{t}{2n}+\frac{p}4
\label{eq6.1}
\end{equation}
and let $\omega=(\omega_1\omega_2\dots\omega_l)$, $\omega_i\in\{0,1\}$ be a finite word. 
Set
\begin{equation}
\tau_\omega=\tau_{\omega_1}\circ\tau_{\omega_2}\circ\dots\circ\tau_{\omega_l}.
\label{eq6.2}
\end{equation}

\begin{lemma}\label{lem6.1}
If $p$ is divisible by $2n-1$ then there are non-trivial $B$-extreme $\tau_{L_{n,p}}$-cycles of length one.
\end{lemma}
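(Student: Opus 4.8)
The plan is to produce an explicit fixed point of one of the maps $\tau_0,\tau_1$ and verify that it is a $B$-extreme cycle. Since a cycle of length one is just a fixed point of a single $\tau_{\omega_i}$, the natural candidate comes from the nonzero branch $\tau_1$. First I would solve $\tau_1(t)=t$ using the formula $\tau_1(t)=\frac{t}{2n}+\frac{p}{4}$ from \eqref{eq6.1}. This gives $t\left(1-\frac{1}{2n}\right)=\frac{p}{4}$, hence $t=\frac{p}{4}\cdot\frac{2n}{2n-1}=\frac{np}{2(2n-1)}$. Denote this fixed point by $x_0$.

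Next I would verify $B$-extremality. By the reformulation of Lemma \ref{lem3.4} recalled just before the statement, a length-one $\tau_L$-cycle $\{x_0\}$ is $B$-extreme if and only if $x_0\in\frac12\bz$ (equivalently $2\cdot x_0\in\bz$, since $B=\{0,2\}$ and we need $b\cdot x_0\in\bz$ for all $b\in B$). Computing, $2x_0=\frac{np}{2n-1}$. The hypothesis that $2n-1$ divides $p$ is exactly what makes this an integer: writing $p=(2n-1)m$ gives $2x_0=nm\in\bz$, so $x_0=\frac{nm}{2}\in\frac12\bz$. This confirms that $|\chi_B(x_0)|=1$ and hence the singleton $\{x_0\}$ is a $B$-extreme cycle.

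Finally I would check that this cycle is genuinely non-trivial, i.e.\ that $x_0\neq 0$. Since $p>0$ (indeed $p$ is a positive odd integer) and $n\geq 1$, we have $x_0=\frac{np}{2(2n-1)}>0$, so $x_0\neq 0$ and the cycle is distinct from the trivial singleton $\{0\}$. One should also note $x_0$ lies in the attractor $X(L_{n,p})$: since $x_0$ is the fixed point of the contraction $\tau_1$, it is the limit of the iterates $\tau_1^{m}(0)$, and by definition of $X(L)$ as the attractor of the IFS $(\tau_l)_{l\in L}$, the fixed point of any single branch lies in $X(L_{n,p})$.

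I do not expect any serious obstacle here: the entire content is the elementary observation that the fixed point $x_0=\frac{np}{2(2n-1)}$ of $\tau_1$ lands in $\frac12\bz$ precisely under the divisibility hypothesis $(2n-1)\mid p$, together with the equivalence (from Lemma \ref{lem3.4}) between $B$-extremality and membership in $\frac12\bz$. The only point requiring a line of care is confirming membership in $X(L_{n,p})$ rather than merely in $\frac12\bz$, but this follows automatically since fixed points of the IFS branches always belong to the attractor.
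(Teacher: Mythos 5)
Your proof is correct and takes essentially the same route as the paper: solve $\tau_1(t)=t$ to get $t=\frac{np}{2(2n-1)}$ and use the hypothesis $(2n-1)\mid p$ to conclude $t=\frac{nm}{2}\in\frac12\bz$, which is exactly $B$-extremality by the reformulation of Lemma \ref{lem3.4}. The extra checks you add (that $x_0\neq 0$ and that the fixed point of $\tau_1$ lies in the attractor $X(L_{n,p})$) are left implicit in the paper but are correct and harmless.
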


\begin{proof}
Let $p=m(2n-1)$, $m\in\bz$. Then the solution $t$ to $\tau_1(t)=t$ is 
\begin{equation}
t=\frac{np}{2(2n-1)}=\frac{mn}{2}\in\frac12\bz.
\label{eq6.4}
\end{equation}
Hence $\left\{\frac{mn}2\right\}$ is a $B$-extreme $\tau_{L_{n,p}}$-cycle.
\end{proof}

\begin{corollary}\label{cor6.2}
(i)
For $R=4$, $B=\{0,2\}$, and $L=\{0,3\}$, the singleton $\mathcal C=\{1\}$ is a $B$-extreme $\tau_L$-cycle; and so in particular $\Gamma(\{0,3\})$ is not an ONB in $L^2(\mu_{1/4})$.

(ii) For $R=6$, $B=\{0,2\}$ and $L=\left\{0,\frac{15}2\right\}$, the singleton $\mathcal C=\{\frac32\}$ is a $B$-extreme $\tau_L$-cycle.

(iii) For $R=8$, $B=\{0,2\}$, and $L=\{0,14\}$, the singleton $\mathcal C=\{2\}$ is a $B$-extreme $\tau_L$-cycle.
\end{corollary}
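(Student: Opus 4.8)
\textbf{Proof plan for Corollary \ref{cor6.2}.}

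The plan is to recognize each of the three items as a direct specialization of Lemma \ref{lem6.1}, since in each case the triple $(R,B,L)$ has the form prescribed in the Setting above with an explicit choice of $n$, $p$, and $m$. First I would note that the Setting writes $R = 2n$, $B = \{0,2\}$, and $L = L_{n,p} = \{0, np/2\}$, and that Lemma \ref{lem6.1} produces a length-one $B$-extreme $\tau_{L_{n,p}}$-cycle $\{mn/2\}$ precisely when $p = m(2n-1)$ for some integer $m$. So the entire task reduces to reading off $n$, $p$, and verifying that the divisibility $2n-1 \mid p$ holds, and that the resulting fixed point $mn/2$ matches the asserted cycle.

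For item (i), $R = 4$ gives $n = 2$, so $2n - 1 = 3$; taking $L = \{0,3\}$ means $np/2 = 3$, hence $p = 3$ and $m = 1$, and $2n-1 = 3$ indeed divides $p = 3$. Lemma \ref{lem6.1} then yields the cycle $\{mn/2\} = \{1\}$, matching the claim. For item (ii), $R = 6$ gives $n = 3$, so $2n - 1 = 5$; here $np/2 = 15/2$ forces $p = 5$, and $m = 1$ since $p = m(2n-1) = 5m$, so the cycle is $\{mn/2\} = \{3/2\}$. For item (iii), $R = 8$ gives $n = 4$, so $2n - 1 = 7$; from $np/2 = 14$ we get $p = 7$, whence $m = 1$ and the fixed point is $\{mn/2\} = \{2\}$. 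In each case $p$ is odd and the Setting's hypotheses hold, so Lemma \ref{lem6.1} applies verbatim.

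Finally, for the added conclusion in item (i) that $\Gamma(\{0,3\})$ is not an ONB in $L^2(\mu_{1/4})$, I would invoke Lemma \ref{lem3.3}: in dimension one, $\Gamma(L)$ is an ONB in $L^2(\mu_B)$ if and only if the only $B$-extreme cycle in $X(L)$ is $\{0\}$. Since $\{1\}$ is a non-trivial $B$-extreme cycle, the ONB property fails. (One should check $1 \in X(L)$, which is immediate because $X(\{0,3\})$ contains the fixed point $x(3) = 3/(4-1) = 1$ of $\tau_1$.) There is no genuine obstacle here: the corollary is a worked-example consequence of Lemma \ref{lem6.1}, and the only care needed is the bookkeeping of matching $np/2$ to the stated $L$ and confirming $2n-1 \mid p$ in each line.
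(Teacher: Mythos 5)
Your proposal is correct and follows exactly the route the paper intends: Corollary \ref{cor6.2} is stated as an immediate specialization of Lemma \ref{lem6.1}, and your bookkeeping ($n=2,3,4$ with $p=3,5,7$, each equal to $2n-1$ so $m=1$, giving fixed points $mn/2 = 1, 3/2, 2$) matches the fixed-point formula $t = np/(2(2n-1))$ from that lemma in all three cases. Your additional verification for item (i) — invoking Lemma \ref{lem3.3} and checking that $1 = 3/(4-1) \in X(\{0,3\})$ — is precisely the justification the paper leaves implicit, so there is nothing to add.
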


\begin{definition}\label{def6.3}
Let $n\in\bz_+$ be given, and set $R=2n$, $B=\{0,2\}$ and let $\mu=\mu_{1/2n}=\mu_B$ be the fractal measure from section \ref{comp}. We say that $p\in\bz_+$ odd is {\it admissible} iff there are non-trivial $B$-extreme $\tau_L$-cycles. Here $L=L_{n,p}=\{0,np/2\}$.

Note that if $n=2$ then $p=3$ is admissible. A consequence of Lemma \ref{lem6.1} is that for $n\in\bz_+$, $p=2n-1$ is admissible, and, with Proposition \ref{pr3.8}, we obtain that any $p$ divisible by $2n-1$ is admissible.

\end{definition}

{\bf Question.} Are there admissible values of $p$ not divisible by $2n-1$?

The next theorem offers an affirmative answer.

\begin{theorem}\label{th6.5}
Let $n\in\bz_+$ be given. Set 
\begin{equation}
p=\sum_{i=0}^{2n-1}(2n)^i.
\label{eq6.5}
\end{equation}
Then $p$ is admissible and not divisible by $2n-1$.  There are associated $B$-extreme cycles of length $2n$. 
\end{theorem}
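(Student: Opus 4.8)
The plan is to exhibit an explicit $B$-extreme $\tau_L$-cycle of length $2n$ for the number $p=\sum_{i=0}^{2n-1}(2n)^i$, and then separately verify that this $p$ is not divisible by $2n-1$. By Lemma \ref{lem3.4}, a $\tau_L$-cycle $\mathcal C$ is $B$-extreme precisely when $\mathcal C\subset\frac12\bz$, so the task reduces to finding a genuine cycle of the maps $\tau_0,\tau_1$ from \eqref{eq6.1} that stays inside $\frac12\bz$ and has exactly $2n$ distinct points. Recall $R=2n$, $B=\{0,2\}$, and $L=L_{n,p}=\{0,np/2\}$, with $\tau_0(t)=t/(2n)$ and $\tau_1(t)=t/(2n)+p/4$.

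First I would look for a cycle of period $2n$ using the word consisting of a single $\tau_1$ followed by enough $\tau_0$'s, or some balanced choice of digits; the natural guess is the point fixed by $\tau_1\circ\tau_0^{\,2n-1}$ (or a cyclic variant), since a length-$2n$ cycle corresponds to a fixed point of a length-$2n$ composition. Writing out $\tau_\omega(t)=t$ for the chosen word $\omega$ of length $2n$ gives a linear equation whose solution is $t=\frac{(np/4)\,(2n)^{j}}{(2n)^{2n}-1}$ summed over the positions $j$ where $\omega$ has a $1$. The key arithmetic observation is that $(2n)^{2n}-1$ and the specific $p=\frac{(2n)^{2n}-1}{2n-1}$ are tightly linked: indeed $p(2n-1)=(2n)^{2n}-1$, so the denominator $(2n)^{2n}-1$ appearing in the fixed-point formula cancels against the factor $p$ in the numerator, leaving a clean value. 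I would carry out this cancellation to show the fixed point and all $2n$ iterates land in $\frac12\bz$, which is exactly $B$-extremality.

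Concretely, I expect the cycle to be generated by the simplest nontrivial word and to consist of points of the form $x_i=\frac{n}{2}\cdot(\text{integer})$ obtained by the telescoping identity $\sum_{i=0}^{2n-1}(2n)^i=\frac{(2n)^{2n}-1}{2n-1}$. The verification that the $2n$ iterates are genuinely distinct (so the cycle really has length $2n$ and is nontrivial, i.e.\ not the singleton $\{0\}$) must be checked, but it should follow because successive iterates differ by the nonzero translation contributions $p/4$ scaled down, and a coincidence $x_i=x_j$ would force a proper divisor of $2n$ to be a period, contradicting the primitivity of the chosen word. Then I would handle the non-divisibility claim: since $p=\sum_{i=0}^{2n-1}(2n)^i\equiv\sum_{i=0}^{2n-1}1=2n\equiv 1\pmod{2n-1}$ (as $2n\equiv 1\bmod (2n-1)$), we get $p\equiv 1\not\equiv 0\pmod{2n-1}$, so $2n-1\nmid p$. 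This is the decisive point distinguishing the present example from Lemma \ref{lem6.1}.

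The main obstacle I anticipate is not the algebra of the fixed point, which is routine telescoping, but rather pinning down \emph{which} word $\omega$ of length $2n$ yields a cycle whose every iterate lies in $\frac12\bz$ and confirming all iterates are distinct; a careless choice of digit pattern may produce iterates with odd denominators $4$ or $8$ that escape $\frac12\bz$, exactly as in the failed cases analyzed in the proof of Proposition \ref{pr5.1}. So the heart of the argument is selecting the digit string correctly and tracking denominators through each of the $2n$ applications of $\tau_0,\tau_1$ to certify that the factor of $p$ supplies enough divisibility by $2n-1$ (hence by the relevant power structure) to keep every point half-integral. Once the right word is identified, the cancellation $p(2n-1)=(2n)^{2n}-1$ does all the work, and the non-divisibility is immediate from the congruence above.
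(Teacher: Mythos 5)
Your skeleton is the right one -- reduce, via Lemma \ref{lem3.4}, to exhibiting a length-$2n$ cycle of $\tau_0,\tau_1$ inside $\frac12\bz$, compute the fixed point of a length-$2n$ word, and exploit the identity $p(2n-1)=(2n)^{2n}-1$ -- and your non-divisibility argument ($p\equiv 2n\equiv 1\pmod{2n-1}$) is exactly the paper's. But the decisive step, \emph{which} word to take, is precisely what you defer, and the guess you do offer is the wrong one. For your word $\tau_1\circ\tau_0^{2n-1}$ the fixed point is $t=\frac{(p/4)(2n)^{2n}}{(2n)^{2n}-1}=\frac{(2n)^{2n}}{4(2n-1)}$, so $2t=\frac{2^{2n-1}n^{2n}}{2n-1}$; since $2n-1$ is odd and coprime to $n$, this is an integer only when $2n-1=1$. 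Hence for $n\geq 2$ that cycle is not $B$-extreme, and no cyclic variant can rescue it, because $B$-extremality requires \emph{every} point of the cycle to lie in $\frac12\bz$. The paper uses the opposite word, $\tau_0\circ\tau_1^{2n-1}$ (one $0$ and $2n-1$ ones): its fixed point is $t=\frac{\frac{np}{2}p^*}{(2n)^{2n}-1}=\frac{np^*}{2(2n-1)}$, where $p^*=\sum_{i=0}^{2n-2}(2n)^i\equiv 0\pmod{2n-1}$, so $t=\frac{nm}{2}\in\frac12\bz$ with $p^*=m(2n-1)$; the iterates satisfy $\tau_1^k(t)=t+\frac n2\left((2n)^{2n-2}+\cdots+(2n)^{2n-k-1}\right)$ for $0\leq k\leq 2n-1$, which are half-integers, strictly increasing (hence $2n$ distinct points), and $\tau_0$ carries the last one back to $t$.

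The irony is that your own fixed-point formula settles the choice you left open. After the cancellation $p(2n-1)=(2n)^{2n}-1$, the fixed point of a word $\omega$ is $\frac{\sum_j(2n)^j}{4(2n-1)}$, the sum running over the positions of the ones in $\omega$; since every power of $2n$ is $\equiv 1\pmod{2n-1}$, the numerator is congruent to the number of ones, and (as $2n-1$ is odd and each $(2n)^j$ is even) the point lies in $\frac12\bz$ if and only if $2n-1$ divides the number of ones. For a nontrivial word of length $2n$ this forces exactly $2n-1$ ones and one zero -- the paper's word -- and since every cyclic shift has the same digit count, the whole cycle is then automatically in $\frac12\bz$. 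Had you run this divisibility count you would have landed on the correct cycle; as written, the proposal guesses a digit string whose cycle fails $B$-extremality and explicitly leaves the verification (``the heart of the argument,'' in your words) undone, so it does not yet constitute a proof.
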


\begin{proof}
Let $p$ be given by \eqref{eq6.5}. Then $p\equiv 1\mod 2n-1$, so it  is not divisible by $2n-1$. We shall need 
\begin{equation}
p^*:=\sum_{i=0}^{2n-2}(2n)^i.
\label{eq6.6}
\end{equation}
Note that $p^*\equiv 0\mod 2n-1$. So $p^*=m(2n-1)$ for some $m\in\bz$. 

Consider the solution $t$ of the fixed point equation 
$$\tau_0\tau_1^{2n-1}(t)=t$$
It is easy to see that 
$$t=\frac{\frac{np}{2} \sum_{i=0}^{2n-2}(2n)^i}{(2n)^{2n}-1}=\frac{np p^*}{2(2n-1)p}=\frac{np^*}{2(2n-1)}=\frac{nm}{2}\in\frac12\bz.$$

We check by induction that $\tau_1^k(t)\in\frac12\bz$ for $k\in\{0,1,\dots, 2n-1\}$.
We use that $p=p^*+(2n)^{2n-1}$. We have
$$\tau_1(t)=\frac{t+\frac{np}{2}}{2n}=\frac{\frac{mn}{2}+\frac{np}{2}}{2n}=\frac{n}{2}\frac{\frac{p^*}{2n-1}+p^*+(2n)^{2n-1}}{2n}=
\frac{n}{2}\left(\frac{(2n)p^*}{2n(2n-1)}+(2n)^{2n-2}\right).$$
So 
$$\tau_1(t)=\frac{mn}{2}+\frac{n}{2}(2n)^{2n-2}=t+\frac{n}{2}(2n)^{2n-2}.$$
Then 
$$\tau_1^2(t)=\tau_1\left(t+\frac{n}{2}(2n)^{2n-2}\right)=\tau_1(t)+\frac{1}{2n}\frac{n}{2}(2n)^{2n-2}=t+\frac{n}{2}(2n)^{2n-2}+\frac{n}{2}(2n)^{2n-3},$$
where we used the previous step in the last equality. 

By induction we get 

$$\tau_1^k(t)=t+\frac{n}{2}(2n)^{2n-2}+\frac{n}{2}(2n)^{2n-3}+\dots+\frac{n}{2}(2n)^{2n-k-1}$$
for all $k\leq 2n-1$. And this shows that $\tau_1^k(t)$ is in $\frac12\bz$. Since $\tau_0\tau_1^{2n-1}(t)=t$, this implies that the entire cycle is in $\frac12\bz$, so it is a $B$-extreme $\tau_{L_{n,p}}$-cycle.

\end{proof}

\begin{remark}\label{rem5.9}
An application of Theorem \ref{th6.5} to $R=4$ and $p=1+4+4^2+4^3=85$ accounts for the cycle $\mathcal C=\{23,27,28,7\}$ from the table before Remark \ref{rem5.5}.
It is the smallest admissible $p$ not divisible by 3.

For $R=6$ and $p=1+6+6^2+6^3+6^4+6^5=9331$ on has $L=\{0,27993/2\}$ the $B$-extreme cycle of length 6 is 
$$\mathcal C=\left\{ \frac{4821}2,\frac{5469}2, \frac{5577}2, \frac{5595}2, 2799,\frac{933}2\right\}$$ 

 It is also the smallest admissible $p$ not divisible by $5$. 

For $R=8$ and $p=\sum_{i=0}^7 8^i=2396745$ one has $L=\{0,4793490\}$ and the $B$-extreme cycle of length 8 is 
$$\mathcal C=\left\{ 609886, 675422, 683614, 684638, 684766, 684782, 684784, 85598\right\}.$$

We used the following Mathematica program to check for extreme cycles. 

\begin{verbatim}
r = 8;
NextCycle[x_, p_] := 
  If[IntegerQ[2*x/r], x/r, If[IntegerQ[2*(x + p)/r], (x + p)/r, -1]];
pmax = r*(r^r - 1)/(4*(r - 1))
For[p = pmax - r/2, p < pmax, p = p + r/2; 
 If[! IntegerQ[2*p/(r - 1)],
  For[ix = 1/2, ix <= IntegerPart[2*p/(r - 1)]/2, ix = ix + 1/2, 
   x = NextCycle[ix, p]; flag = 0;
   While[x != -1 && x != ix, x = NextCycle[x, p]];
   If[x != -1, cycle = {}; If[flag == 0, Print["p= ", p]; flag = 1]; 
    Print["Cycle pt "]; i = 1;
    While[i == 1, x = NextCycle[x, p]; Print[" ", x]; 
     If[x == ix, i = 0]]; Print["End Cycle "]
    ]
   ]
  ]
 ]



\end{verbatim}

\end{remark}

\begin{acknowledgements}
The co-authors are grateful for helpful conversations with members of the analysis groups at their respective universities. In addition we are grateful to Professor Michael Reid (at UCF) who helped us with the proof of Proposition \ref{pr5.1}.
\end{acknowledgements}


\begin{thebibliography}{DFdGtHR04}

\bibitem[BJ99]{BrJo99}
Ola Bratteli and Palle E.~T. Jorgensen.
\newblock Iterated function systems and permutation representations of the
  {C}untz algebra.
\newblock {\em Mem. Amer. Math. Soc.}, 139(663):x+89, 1999.

\bibitem[CHK97]{CHK97}
R.~Craigen, W.~H. Holzmann, and H.~Kharaghani.
\newblock On the asymptotic existence of complex {H}adamard matrices.
\newblock {\em J. Combin. Des.}, 5(5):319--327, 1997.

\bibitem[CKS08]{CKS08}
Wojciech Czaja, Gitta Kutyniok, and Darrin Speegle.
\newblock Beurling dimension of {G}abor pseudoframes for affine subspaces.
\newblock {\em J. Fourier Anal. Appl.}, 14(4):514--537, 2008.

\bibitem[Den09]{De09}
Qi-Rong Deng.
\newblock Reverse iterated function system and dimension of discrete fractals.
\newblock {\em Bull. Aust. Math. Soc.}, 79(1):37--47, 2009.

\bibitem[DFdGtHR04]{DFG+04}
Remco Duits, Luc Florack, Jan de~Graaf, and Bart ter Haar~Romeny.
\newblock On the axioms of scale space theory.
\newblock {\em J. Math. Imaging Vision}, 20(3):267--298, 2004.

\bibitem[DHS09]{DHS09}
Dorin~Ervin Dutkay, Deguang Han, and Qiyu Sun.
\newblock On the spectra of a {C}antor measure.
\newblock {\em Adv. Math.}, 221(1):251--276, 2009.

\bibitem[DHSW09]{DHSW09}
Dorin~Ervin Dutkay, Deguang Han, Qiyu Sun, and Eric Weber.
\newblock On the Beurling dimension of exponential frames	
\newblock {\em Adv. Math.} 226 (2011), no. 1.

\bibitem[Dit03]{Di03}
P.~Dita.
\newblock On the parametrization of complex {H}adamard matrices.
\newblock {\em Romanian J. Phys.}, 48(5-6):619--626 (2004), 2003.

\bibitem[Dit04]{Di04}
P.~Dita.
\newblock Some results on the parametrization of complex {H}adamard matrices.
\newblock {\em J. Phys. A}, 37(20):5355--5374, 2004.

\bibitem[DJ06a]{DJ06}
Dorin~E. Dutkay and Palle E.~T. Jorgensen.
\newblock Wavelets on fractals.
\newblock {\em Rev. Mat. Iberoam.}, 22(1):131--180, 2006.

\bibitem[DJ06b]{DJ06b}
Dorin~Ervin Dutkay and Palle E.~T. Jorgensen.
\newblock Iterated function systems, {R}uelle operators, and invariant
  projective measures.
\newblock {\em Math. Comp.}, 75(256):1931--1970 (electronic), 2006.

\bibitem[DJ07a]{DJ07a}
Dorin~Ervin Dutkay and Palle E.~T. Jorgensen.
\newblock Analysis of orthogonality and of orbits in affine iterated function
  systems.
\newblock {\em Math. Z.}, 256(4):801--823, 2007.

\bibitem[DJ07b]{DJ07b}
Dorin~Ervin Dutkay and Palle E.~T. Jorgensen.
\newblock Fourier frequencies in affine iterated function systems.
\newblock {\em J. Funct. Anal.}, 247(1):110--137, 2007.

\bibitem[DJ07c]{DJ07c}
Dorin~Ervin Dutkay and Palle E.~T. Jorgensen.
\newblock Harmonic analysis and dynamics for affine iterated function systems.
\newblock {\em Houston J. Math.}, 33(3):877--905, 2007.

\bibitem[DJ07d]{DJ07d}
Dorin~Ervin Dutkay and Palle E.~T. Jorgensen.
\newblock Martingales, endomorphisms, and covariant systems of operators in
  {H}ilbert space.
\newblock {\em J. Operator Theory}, 58(2):269--310, 2007.

\bibitem[DJ08]{DJ08}
Dorin~Ervin Dutkay and Palle E.~T. Jorgensen.
\newblock Fourier series on fractals: a parallel with wavelet theory.
\newblock In {\em Radon transforms, geometry, and wavelets}, volume 464 of {\em
  Contemp. Math.}, pages 75--101. Amer. Math. Soc., Providence, RI, 2008.

\bibitem[DJ09]{DJ09}
Dorin~Ervin Dutkay and Palle E.~T. Jorgensen.
\newblock Quasiperiodic spectra and orthogonality for iterated function system
  measures.
\newblock {\em Math. Z.}, 261(2):373--397, 2009.

\bibitem[DvA08]{DA08}
Remco Duits and Markus van Almsick.
\newblock The explicit solutions of linear left-invariant second order
  stochastic evolution equations on the 2{D} {E}uclidean motion group.
\newblock {\em Quart. Appl. Math.}, 66(1):27--67, 2008.

\bibitem[FLS09]{FFS09}
Kevin Ford, Florian Luca, and Igor~E. Shparlinski.
\newblock On the largest prime factor of the {M}ersenne numbers.
\newblock {\em Bull. Aust. Math. Soc.}, 79(3):455--463, 2009.

\bibitem[Fug74]{Fu74}
Bent Fuglede.
\newblock Commuting self-adjoint partial differential operators and a group
  theoretic problem.
\newblock {\em J. Functional Analysis}, 16:101--121, 1974.

\bibitem[GIL09]{GIL09}
Daniele Guido, Tommaso Isola, and Michel~L. Lapidus.
\newblock A trace on fractal graphs and the {I}hara zeta function.
\newblock {\em Trans. Amer. Math. Soc.}, 361(6):3041--3070, 2009.

\bibitem[HL08]{HL08}
Tian-You Hu and Ka-Sing Lau.
\newblock Spectral property of the {B}ernoulli convolutions.
\newblock {\em Adv. Math.}, 219(2):554--567, 2008.

\bibitem[Hut81]{Hut81}
John~E. Hutchinson.
\newblock Fractals and self-similarity.
\newblock {\em Indiana Univ. Math. J.}, 30(5):713--747, 1981.

\bibitem[IKP99]{IKP99}
Alex Iosevich, Nets Katz, and Steen Pedersen.
\newblock Fourier bases and a distance problem of {E}rd{\" o}s.
\newblock {\em Math. Res. Lett.}, 6(2):251--255, 1999.

\bibitem[IKT03]{IKT03}
Alex Iosevich, Nets Katz, and Terence Tao.
\newblock The {F}uglede spectral conjecture holds for convex planar domains.
\newblock {\em Math. Res. Lett.}, 10(5-6):559--569, 2003.

\bibitem[IW08]{IW08}
Marius Ionescu and Yasuo Watatani.
\newblock {$C\sp \ast$}-algebras associated with {M}auldin-{W}illiams graphs.
\newblock {\em Canad. Math. Bull.}, 51(4):545--560, 2008.

\bibitem[J{\o}r82a]{Jor82b}
Palle E.~T. J{\o}rgensen.
\newblock A generalization to locally compact abelian groups of a spectral
  problem for commuting partial differential operators.
\newblock {\em J. Pure Appl. Algebra}, 25(3):297--301, 1982.

\bibitem[J{\o}r82b]{Jor82a}
Palle E.~T. J{\o}rgensen.
\newblock Spectral theory of finite volume domains in {${\bf R}\sp{n}$}.
\newblock {\em Adv. in Math.}, 44(2):105--120, 1982.

\bibitem[JP91]{JoPe91}
Palle E.~T. Jorgensen and Steen Pedersen.
\newblock An algebraic spectral problem for {$L\sp 2(\Omega),\
  \Omega\subset{\bf R}\sp n$}.
\newblock {\em C. R. Acad. Sci. Paris S\'er. I Math.}, 312(7):495--498, 1991.

\bibitem[JP98]{JP98}
Palle E.~T. Jorgensen and Steen Pedersen.
\newblock Dense analytic subspaces in fractal {$L\sp 2$}-spaces.
\newblock {\em J. Anal. Math.}, 75:185--228, 1998.

\bibitem[JP99]{JP99}
Palle E.~T. Jorgensen and Steen Pedersen.
\newblock Spectral pairs in {C}artesian coordinates.
\newblock {\em J. Fourier Anal. Appl.}, 5(4):285--302, 1999.

\bibitem[{\L}W02]{LW02}
Izabella {\L}aba and Yang Wang.
\newblock On spectral {C}antor measures.
\newblock {\em J. Funct. Anal.}, 193(2):409--420, 2002.

\bibitem[MP04]{MP04}
Leo Murata and Carl Pomerance.
\newblock On the largest prime factor of a {M}ersenne number.
\newblock In {\em Number theory}, volume~36 of {\em CRM Proc. Lecture Notes},
  pages 209--218. Amer. Math. Soc., Providence, RI, 2004.

\bibitem[Mum94]{Mu94}
David Mumford.
\newblock Pattern theory: a unifying perspective.
\newblock In {\em First {E}uropean {C}ongress of {M}athematics, {V}ol.\ {I}
  ({P}aris, 1992)}, volume 119 of {\em Progr. Math.}, pages 187--224.
  Birkh\"auser, Basel, 1994.

\bibitem[Odl78]{Od78}
A.~M. Odlyzko.
\newblock Nonnegative digit sets in positional number systems.
\newblock {\em Proc. London Math. Soc. (3)}, 37(2):213--229, 1978.

\bibitem[Ped04]{Pe04}
Steen Pedersen.
\newblock On the dual spectral set conjecture.
\newblock In {\em Current trends in operator theory and its applications},
  volume 149 of {\em Oper. Theory Adv. Appl.}, pages 487--491. Birkh\"auser,
  Basel, 2004.

\bibitem[Str99]{St99a}
Robert~S. Strichartz.
\newblock Analysis on fractals.
\newblock {\em Notices Amer. Math. Soc.}, 46(10):1199--1208, 1999.

\bibitem[Str00]{St00}
Robert~S. Strichartz.
\newblock Mock {F}ourier series and transforms associated with certain {C}antor
  measures.
\newblock {\em J. Anal. Math.}, 81:209--238, 2000.

\bibitem[SW99]{St99b}
Robert~S. Strichartz and Yang Wang.
\newblock Geometry of self-affine tiles. {I}.
\newblock {\em Indiana Univ. Math. J.}, 48(1):1--23, 1999.

\bibitem[SZ08]{SZ08}
Adam Skalski and Joachim Zacharias.
\newblock Noncommutative topological entropy of endomorphisms of {C}untz
  algebras.
\newblock {\em Lett. Math. Phys.}, 86(2-3):115--134, 2008.

\bibitem[Tao04]{Tao04}
Terence Tao.
\newblock Fuglede's conjecture is false in 5 and higher dimensions.
\newblock {\em Math. Res. Lett.}, 11(2-3):251--258, 2004.

\bibitem[Vas06]{Vas06}
D.~V. Vasilev.
\newblock The {L}ucas-{L}ehmer test for {M}ersenne numbers.
\newblock {\em Vests\=\i\ Nats. Akad. Navuk Belarus\=\i\ Ser. F\=\i z.-Mat.
  Navuk}, (2):113--115, 129, 2006.

\bibitem[XCQ05]{XY05}
Mingyuan Xia, Yingbao Chen, and Hong Qin.
\newblock Some results for the existence of regular complex {H}adamard
  matrices.
\newblock {\em Util. Math.}, 68:103--108, 2005.

\end{thebibliography}
\end{document}